\documentclass[reqno, 12pt]{amsart}
\usepackage{amsmath}
\usepackage{amssymb}
\usepackage{amsthm}
\usepackage{enumerate}
\usepackage{upgreek}
\usepackage[mathscr]{eucal}
\usepackage{graphicx}

\setlength{\textwidth}{5.5in}
\setlength{\textheight}{8in}
\setlength{\oddsidemargin}{.5in}
\setlength{\evensidemargin}{.5in}
\setlength{\topmargin}{.25in}

\newtheorem{theorem}{Theorem}[section]
\newtheorem{lemma}[theorem]{Lemma}
\newtheorem{proposition}[theorem]{Proposition}
\newtheorem{corollary}[theorem]{Corollary}
\newtheorem{definition}[theorem]{Definition}
\newtheorem{definitions}[theorem]{Definitions}
\theoremstyle{definition}
\newtheorem{example}[theorem]{Example}
\newtheorem{remark}[theorem]{Remark}

\begin{document}
\title[]{Cardinal Functions, Bornologies and Strong Whitney convergence}

\author{Tarun Kumar Chauhan \and Varun Jindal*}

\thanks{*Supported by NBHM Research Grant 02011/6/2020/NBHM(R.P) R\&D II/6277.}

\address{Tarun Kumar Chauhan: Department of Mathematics, Malaviya National Institute of Technology Jaipur, Jaipur-302017, Rajasthan, India}
\email{rajputtarun.chauhan@gmail.com}

\address{Varun Jindal: Department of Mathematics, Malaviya National Institute of Technology Jaipur, Jaipur-302017, Rajasthan, India}
\email{vjindal.maths@mnit.ac.in}


\subjclass[2010]{Primary  54A25, 54C35; Secondary 54C30}
\keywords{Bornology; Cardinal invariants; Continuous real functions; Shield; Strong domination number; Strong Whitney convergence}

\begin{abstract} 
Let $C(X)$ be the set of all real valued continuous functions on a metric space $(X,d)$. Caserta introduced the topology of strong Whitney convergence on bornology for $C(X)$ in \cite{SWc}, which is a generalization of the topology of strong uniform convergence on bornology introduced by Beer-Levi in \cite{Suc}. The purpose of this paper is to study various cardinal invariants of the function space $C(X)$ endowed with the topologies of strong Whitney  and Whitney convergence on bornology. In the process, we present simpler proofs of a number of results from the literature. In the end, relationships between cardinal invariants of strong Whitney convergence and strong uniform convergence on $C(X)$ have also been studied. 

\end{abstract}	

\maketitle

\section{Introduction}
A family $\mathcal{B}$ of nonempty subsets of a set $X$ is called a $\textit{bornology}$ on $X$ if $\mathcal{B}$ forms an ideal and covers $X$. A subfamily $\mathcal{B}_0$ of $\mathcal{B}$ satisfying that for every $B\in\mathcal{B}$ there is an element $B'\in\mathcal{B}_0$ such that $B\subseteq B'$ is called \textit{base} for the bornology $\mathcal{B}$. If $\mathcal{B}$ is a bornology on a metric space $(X,d)$ with base $\mathcal{B}_0$ such that every member of $\mathcal{B}_0$ is closed (or compact) in $(X,d)$, then $\mathcal{B}$ is said to have a \textit{closed} (or \textit{compact}) \textit{base}.  

The smallest (respectively, largest) bornology on $X$ is the collection $\mathcal{F}$ of all finite (respectively, $\mathcal{P}_0(X)$ of all nonempty) subsets of $X$. Another important bornology on $X$ is $\mathcal{K}$, the family of all nonempty \textit{relatively compact} subsets (that is, subsets with compact closure) of $X$. For more details about bornologies on metric spaces see \cite{Ballf,Tbab,Bc}.



For a metric space $(X,d)$ and a bornology $\mathcal{B}$ on $X$, the most commonly used topology on $C(X)$ is the classical topology of uniform convergence on $\mathcal{B}$, denoted by $\tau_{\mathcal{B}}$.  In \cite{Suc}, Beer and Levi introduced a stronger version of the topology $\tau_{\mathcal{B}}$, the topology of strong uniform convergence on $\mathcal{B}$ and denoted by $\tau_{\mathcal{B}}^s$. The study of the topology $\tau_{\mathcal{B}}^s$ was further continued in \cite{Ucucas,ATasucob,Cmotosucob,Cfbafs}. In \cite{SWc}, Caserta generalized the topology $\tau_{\mathcal{B}}^s$, and introduced  a new topology called as the topology of strong Whitney convergence on $\mathcal{B}$, denoted by $\tau_{\mathcal{B}}^{sw}$. The topology $\tau_{\mathcal{B}}^{sw}$ is a stronger form of the classical function space topology $\tau_{\mathcal{B}}^{w}$ of Whitney convergence on $\mathcal{B}$. Recently, the topologies  $\tau_{\mathcal{B}}^{sw}$ and $\tau_{\mathcal{B}}^{w}$ have been studied in \cite{SWcob} and \cite{SWasucob}. The topology $\tau_{\mathcal{B}}^{w}$ is a generalization of the well-known Whitney topology $\tau^w$. It reduces to $\tau^w$ when $\mathcal{B} = \mathcal{P}_0(X)$. The Whitney topology $\tau^w$ was introduced by H. Whitney in \cite{whitney}, and further studied in \cite{Hewitt,Bpifs,hola_novo2,DHHM,M1}.  

In this paper, for a metric space $(X,d)$ and a bornology $\mathcal{B}$ on $X$, we study some important cardinal functions on $C(X)$ endowed with the topologies $\tau_{\mathcal{B}}^{sw}$ and $\tau_{\mathcal{B}}^{w}$. More precisely, we consider character, tightness, weight and network weight. Since these spaces are topological groups, we also consider their index of narrowness. In order to study these cardinal functions, we first examine two new cardinal invariants associated with bornology named as (strong) domination number of $X$ with respect to $\mathcal{B}$. These two cardinal invariants play a pivotal role in all our investigations. Using the index of narrowness of the space $(C(X),\tau_{\mathcal{B}}^{w})$, we present simpler proofs of the results proved in \cite{DHHM} (see, Remark \ref{General remark}). Besides this, to make this study more interesting, we investigate the relationships of these cardinal invariants of $(C(X),\tau_{\mathcal{B}}^{sw})$ with the corresponding cardinal invariants of $(C(X),\tau_{\mathcal{B}}^{s})$. In particular, we show that the character and tightness of $(C(X),\tau_{\mathcal{B}}^{sw})$ may be strictly greater than the character and tightness of $(C(X),\tau_{\mathcal{B}}^{s})$. However, the density, weight, network weight, and index of narrowness of both the spaces are equal.

\section{Preliminaries}

All metric spaces are assumed to have at least two points. For any $x\in X$ and $\delta>0$, $S_{\delta}(x)$ denotes the open ball with center $x$ and radius $\delta$. For any nonempty subset $A$ of $X$,  $A^{\delta}$ represents the $\delta$-\textit{enlargement} of $A$ defined as $A^{\delta} = \cup_{x\in A}S_{\delta}(x)$. Note that $A^{\delta} = \overline{A}^{\delta}$, where $\overline{A}$ denotes the closure of $A$. We denote by $f_0$ the constant function zero on $X$, that is, $f_0: X\to \mathbb{R}$ such that $f_0(x) = 0$ for all $x \in X$. For other terms and notations, we refer to \cite{engelking,Tposocf}. 

Let $\mathcal{B}$ be a bornology on a metric space $(X,d)$. Then the classical \textit{topology $\tau_{\mathcal{B}}$ of uniform convergence on $\mathcal{B}$} for $C(X)$ is determined by the uniformity which has a base for its entourages all sets of the form
\begin{align*}
[B,\epsilon] = \{(f,g) : |f(x)-g(x)| < \epsilon \text{ for all } x\in B\} \ \ \ (B \in \mathcal{B}, \epsilon > 0).
\end{align*}
and the classical uniformity for the \textit{topology $\tau_{\mathcal{B}}^w$ of Whitney convergence on $\mathcal{B}$} has as a base for its entourages all sets of the form
\begin{align*}
[B,\epsilon]^w = \{(f,g) : |f(x)-g(x)| < \epsilon(x) \text{ for all }x\in B\} \ \ \ (B\in \mathcal{B}, \epsilon\in C^+(X)).
\end{align*}
\noindent Here $C^+(X)$ represents the set of all positive real-valued continuous functions defined on $X$. Note that if $\mathcal{B} = \mathcal{P}_0(X)$, then the above uniformity generates the topology $\tau^w$ (Whitney topology) of Whitney convergence on $C(X)$.


The \textit{topology $\tau_\mathcal{B}^s$ of strong uniform convergence on $\mathcal{B}$} is determined by a uniformity on $C(X)$ having as a base all sets of the form
\begin{align*}
[B,\epsilon]^s = \{(f,g): \ \exists~\delta>0 \ \forall x\in B^\delta, |f(x)-g(x)|< \epsilon\} \ \ (B \in \mathcal{B}, \epsilon > 0).
\end{align*}
and the \textit{topology $\tau_{\mathcal{B}}^{sw}$ of strong Whitney convergence on $\mathcal{B}$} is determined by a uniformity having as a base all sets of the form
\begin{align*}
[B,\epsilon]^{sw} = \{(f,g) : \exists ~ \delta > 0 \ \forall x\in B^\delta,  |f(x)-g(x)| < \epsilon(x)\}
\end{align*}
with $B\in \mathcal{B}, \epsilon\in C^+(X)$.

\noindent In general, on $C(X)$ the above defined topologies are related as follows:
\begin{align*}
\tau_{\mathcal{B}} \subseteq \tau_{\mathcal{B}}^{s} \subseteq \tau_{\mathcal{B}}^{sw} ~ \text{ and } ~ \tau_{\mathcal{B}} \subseteq \tau_{\mathcal{B}}^{w} \subseteq \tau_{\mathcal{B}}^{sw} \subseteq \tau^w.
\end{align*}
The relationships between the above mentioned topologies are thoroughly studied in \cite{SWasucob}.

The concept of a shield introduced by Beer et al. in \cite{Bcas} plays an important role in the study of the topologies $\tau_{\mathcal{B}}^{s}$ and $\tau_{\mathcal{B}}^{sw}$ (see, \cite{Ucucas} and \cite{SWasucob}). Recall that for a nonempty subset $A$  of $X$, a superset $A_1$ of $A$ is called a \textit{shield} for $A$ provided that for every closed subset $C$ of $X$ with $C\cap A_1 = \emptyset$, we have $C\cap A^\delta = \emptyset$ for some $\delta > 0$. Since $A^{\delta} = \overline{A}^{\delta}$, a set $A_1$ is a shield for $A$ if and only if $A_1$ is a shield for $\overline{A}$. Evidently, $X$ is a shield for all $A\in \mathcal{P}_0(X)$.

A bornology $\mathcal{B}$ on $X$ is called \textit{shielded from closed sets} if $\mathcal{B}$ contains a shield for each of its members. It is known that every bornology with a compact base is shielded from closed sets. 
We now recall the definitions of various cardinal invariants considered in this paper for an arbitrary topological space $(X,\tau)$.

\hspace{-0.6cm}$\begin{array}{lrl}
\text{character of } x	\text{ in } X & \chi(X,x) &= \aleph_0 + \min\{|\mathscr{B}_x| : \mathscr{B}_x \text{ is a base for } X \text{ at } x\}. \\ 
\text{character of } X	&  \chi(X) &= \sup\{\chi(X,x) : x\in X\}.\\ 
\text{tightness of } x \text{ in } X  & t(x,X) &= \aleph_0 + \min\{\mathfrak{m}:\text{for every } C\subseteq X \text{ with }  \\
& & \ \ \  x\in\overline{C}, \text{ there exits a } C_0\subseteq C  \text{ with } \\
& &  \ \ \  |C_0|\leq\mathfrak{m} \text{ such that } x\in\overline{C_0}\}.\\
\text{tightness of } X   & t(X) &= \sup\{t(x,X) : x\in X\}. \\
\text{density of } X    & d(X) & =  \aleph_0 + \min\{|D| : D \text{ is a dense subset of } X\}. \\
\text{weight of } X  & w(X) & =  \aleph_0 + \min\{|\mathscr{B}| : \mathscr{B} \text{ is a base for } X\}. \\
\text{Lindel\"{o}f degree of } X  & L(X) & =  \aleph_0 + \min\{\mathfrak{m} : \text{ every open cover of } X \\
& & \ \ \  \text{has a subcover with cardinality}\leq\mathfrak{m}\}. \\
\text{cellularity of } X   & c(X) & =   \aleph_0 + \sup\{|\mathscr{U}| : \mathscr{U}\subseteq \tau \text{ and } \mathscr{U} \text{ is a pairwise} \\
 & & \ \ \  \text{disjoint family}\}.\\
 \text{network weight of } X &\ \ nw(X) & =  \aleph_0 + \min\{|\mathscr{N}|: \mathscr{N} \text{ is a network on } X\}.
 
\end{array} $

A collection $\mathscr{N}$ of nonempty subsets of $X$ is called a \textit{network} on $X$ if for each $x\in X$ and $U\in \tau$ with $x\in U$, there exists a $C\in\mathscr{N}$ such that $x\in C \subseteq U$. 


In a special case when $(X,\tau)$ is a topological group, another important cardinal function may be defined on $(X,\tau)$. This cardinal invariant is called the index of narrowness (\cite{tkachenko}). If $(X,\tau)$ is a topological group under addition, then for an infinite cardinal number $\lambda$, $X$ is called $\lambda$-\textit{narrow} if for every neighborhood $U$ of the identity element in $X$, there exists a subset $S$ of $X$ with $|S|\leq \lambda$ such that $X=U+S=\{u+s:u\in U \text{ and } s\in S\}$. Then
 
\vspace{0.2cm}
\hspace{-0.55cm}$\begin{array}{lcl}
\text{index of narrowness of } X & \ \ \ ib(X) & =  \aleph_0 + \min\{\lambda: G \text{ is } \lambda\text{-narrow}\}.
\end{array} $
\vspace{0.2cm}

 \section{Character of $(C(X),\tau_{\mathcal{B}}^{sw})$ and $(C(X),\tau_{\mathcal{B}}^{w})$}

In order to study the character of the spaces $(C(X),\tau_{\mathcal{B}}^{sw})$ and $(C(X),\tau_{\mathcal{B}}^{w})$, we first investigate two new cardinal functions on $X$ named as domination number and strong domination number of $X$ with respect to a family $\mathcal{B}$ of subsets of $X$. 



\begin{definition}\normalfont{Let $\mathcal{B}$ be a family of nonempty subsets of a topological space $X$. A subset $F_{\mathcal{B}}$ of $C(X)$ is called a \textit{dominating subset of $C(X)$ with respect to $\mathcal{B}$} if for every $f \in C(X)$ and every $B\in \mathcal{B}$ there exists $g \in F_{\mathcal{B}}$ such that $f(x) \leq g(x)$ for all $x\in B$. Then the \textit{domination number of $X$ with respect to $\mathcal{B}$} is defined by
		\begin{align*}
		dn_{\mathcal{B}}(X) = \aleph_0 + \min\{&|F_{\mathcal{B}}| : F_{\mathcal{B}} \text{ is a dominating subset of } C(X) \\ &\text{ with respect to } \mathcal{B}\}.
		\end{align*}}	 \end{definition}
When $\mathcal{B} = \mathcal{P}_0(X)$, then $dn_{\mathcal{B}}(X) = dn_{\mathcal{P}_0(X)}(X)$ is simply denoted by $dn(X)$ and is known as \textit{domination number of $X$} (see \cite{mkatetov,DHHM}), and a dominating subset of $C(X)$ with respect to $\mathcal{B}$ for $\mathcal{B}= \mathcal{P}_0(X)$ is known as a \textit{dominating subset of $C(X)$}.

\begin{definition}\normalfont{Let $(X,d)$ be a metric space and let $\mathcal{B}$ be a family of nonempty subsets of $X$. A subset $F_{\mathcal{B}}^s$ of $C(X)$ is called a \textit{strongly dominating subset of $C(X)$ with respect to $\mathcal{B}$} if for every $f \in C(X)$ and every $B\in \mathcal{B}$ there exist $\delta > 0$ and $g \in F_{\mathcal{B}}^s$ such that $f(x) \leq g(x)$ for all $x\in B^\delta$. Then the \textit{strong domination number of $X$ with respect to $\mathcal{B}$} is defined by
		\begin{align*}dn_{\mathcal{B}}^s(X) = \aleph_0 + \min\{&|F_{\mathcal{B}}^s| : F_{\mathcal{B}}^s \text{ is a strongly dominating subset of } C(X) \\ &\text{ with respect to } \mathcal{B}\}.\end{align*}}	 \end{definition}

\begin{remark}\label{inequality of domination numbers}
It can be observed that $dn_{\mathcal{B}}(X) \leq dn_{\mathcal{B}}^s(X) \leq dn(X)$ for any family $\mathcal{B}$ of nonempty subsets of a metric space $(X,d)$.	
\end{remark}

In order to prove our next theorem, we need the following fact that we state as a lemma.

\begin{lemma}\label{bddness of continuous function on compact set} Suppose $(X,d)$ is a metric space. Then every $f\in C(X)$ is bounded on some enlargement of every compact subset of $X$. \end{lemma}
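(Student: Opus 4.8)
The plan is to argue by contradiction, exploiting that a compact subset of a metric space is sequentially compact. Fix $f\in C(X)$ and a compact set $K\subseteq X$, and suppose toward a contradiction that $f$ fails to be bounded on $K^\delta$ for every $\delta>0$. Taking $\delta=1/n$ for each $n\in\mathbb{N}$, unboundedness on $K^{1/n}$ yields a point $y_n\in K^{1/n}$ with $|f(y_n)|>n$. Since $K^{1/n}=\cup_{x\in K}S_{1/n}(x)$, I may then pick $x_n\in K$ with $d(y_n,x_n)<1/n$.

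Next I would use the compactness of $K$ to extract a convergent subsequence $x_{n_k}\to x\in K$. The triangle inequality forces the corresponding points to converge to the same limit, since $d(y_{n_k},x)\le d(y_{n_k},x_{n_k})+d(x_{n_k},x)<1/n_k+d(x_{n_k},x)\to 0$. Continuity of $f$ at $x$ then gives $f(y_{n_k})\to f(x)\in\mathbb{R}$, contradicting $|f(y_{n_k})|>n_k\to\infty$. This contradiction shows that $f$ must be bounded on $K^\delta$ for some $\delta>0$.

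I expect the only subtle point, and the one place where an "obvious" shortcut fails, to be the following: one cannot simply invoke the boundedness of $f$ on a compact set, because the enlargement $K^\delta$ need not be relatively compact in a general metric space. The contradiction argument circumvents this precisely by pulling the hypothetical unbounded points back to a convergent sequence inside $K$, where compactness is genuinely available.

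As an alternative, more constructive route, I could instead cover $K$ by finitely many balls on which $f$ is controlled. For each $x\in K$, continuity supplies $\delta_x>0$ with $|f(y)|<|f(x)|+1$ for all $y\in S_{\delta_x}(x)$; the balls $\{S_{\delta_x/2}(x):x\in K\}$ cover $K$, so finitely many of them, centered at $x_1,\dots,x_m$, suffice. Setting $\delta=\min_i\delta_{x_i}/2$ and $M=\max_i|f(x_i)|+1$, a routine triangle-inequality computation shows that every $y\in K^\delta$ lies in some $S_{\delta_{x_i}}(x_i)$, whence $|f(y)|<M$; this version has the mild advantage of exhibiting both the enlargement radius $\delta$ and the explicit bound $M$ at once.
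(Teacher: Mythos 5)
Your proof is correct. Note that the paper itself states this lemma without proof, treating it as a known fact, so there is no argument of the authors' to compare against; both of your routes --- the sequential-compactness contradiction and the finite-subcover construction with explicit $\delta$ and $M$ --- are complete and valid, and your remark that one cannot simply cite boundedness on compacta because $K^\delta$ need not be relatively compact in a general metric space is precisely the point that makes the lemma nontrivial.
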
	

\begin{theorem}\label{bornology has compact base iff strong domination number is aleph} Let $\mathcal{B}$ be a bornology on a metric space $(X,d)$ with a closed base.  Then the following conditions are equivalent:
	\begin{enumerate}
		\item[(a)] $\mathcal{B}$ has a compact base;
		\item[(b)] $dn_{\mathcal{B}}^s(X) = \aleph_0$;
		\item[(c)] $dn_{\mathcal{B}}(X) = \aleph_0$.
\end{enumerate} \end{theorem}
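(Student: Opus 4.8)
The plan is to establish the cycle (a) $\Rightarrow$ (b) $\Rightarrow$ (c) $\Rightarrow$ (a). The implication (b) $\Rightarrow$ (c) is immediate from Remark \ref{inequality of domination numbers}, since $\aleph_0 \leq dn_{\mathcal{B}}(X) \leq dn_{\mathcal{B}}^s(X)$ forces $dn_{\mathcal{B}}(X) = \aleph_0$ whenever $dn_{\mathcal{B}}^s(X) = \aleph_0$. So the real work lies in (a) $\Rightarrow$ (b) and (c) $\Rightarrow$ (a).

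For (a) $\Rightarrow$ (b), I would exhibit an explicit countable strongly dominating subset. Let $\mathcal{B}_0$ be a compact base for $\mathcal{B}$ and take $F = \{g_n : n \in \mathbb{N}\}$, where $g_n$ is the constant function with value $n$. Given $f \in C(X)$ and $B \in \mathcal{B}$, choose a compact $K \in \mathcal{B}_0$ with $B \subseteq K$. By Lemma \ref{bddness of continuous function on compact set}, $f$ is bounded above on some enlargement $K^{\delta}$, say $f(x) \leq M$ for all $x \in K^{\delta}$; picking any integer $n \geq M$ gives $f(x) \leq n = g_n(x)$ for all $x \in B^{\delta} \subseteq K^{\delta}$. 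Hence $F$ is strongly dominating and $dn_{\mathcal{B}}^s(X) = \aleph_0$.

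The heart of the theorem is (c) $\Rightarrow$ (a), which I would prove by contraposition. Assume $\mathcal{B}$ has no compact base. Since $\mathcal{B}$ carries a closed base $\mathcal{B}_0$, if every member of $\mathcal{B}_0$ were compact then $\mathcal{B}_0$ itself would be a compact base; so there is a closed, non-compact member $B_0 \in \mathcal{B}_0$. As $X$ is metric, non-compactness of $B_0$ yields a sequence in $B_0$ with no convergent subsequence, and after passing to distinct terms its range $D = \{x_n : n \in \mathbb{N}\}$ is a countable set with no limit point in $X$; thus $D$ is closed and discrete in $X$ and $D \subseteq B_0$. Now I would show no countable family can dominate with respect to $\mathcal{B}$ by a diagonal argument: given any $\{f_k : k \in \mathbb{N}\} \subseteq C(X)$, define $g : D \to \mathbb{R}$ by $g(x_n) = f_n(x_n) + 1$. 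Since $D$ is discrete, $g$ is continuous on the closed set $D$, and by the Tietze extension theorem (metric spaces are normal) it extends to some $f \in C(X)$. For each $k$ we then have $f(x_k) = f_k(x_k) + 1 > f_k(x_k)$ with $x_k \in D \subseteq B_0$, so $f_k$ fails to satisfy $f \leq f_k$ on $B_0$. Hence $\{f_k\}$ is not dominating with respect to $\mathcal{B}$, giving $dn_{\mathcal{B}}(X) > \aleph_0$.

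The main obstacle is the (c) $\Rightarrow$ (a) step, and within it the key points are: using the closed-base hypothesis to convert ``no compact base'' into an honest closed non-compact member $B_0$; extracting from $B_0$ a closed discrete sequence, which is precisely where metrizability and sequential compactness are used; and realizing the diagonal values as a genuine element of $C(X)$ via Tietze, which is clean exactly because the diagonalizing set $D$ is closed and discrete. Everything else is bookkeeping with enlargements and the inequalities among the domination numbers.
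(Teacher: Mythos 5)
Your proposal is correct and follows essentially the same route as the paper: the same countable family of constant functions for $(a)\Rightarrow(b)$ via Lemma \ref{bddness of continuous function on compact set}, the same appeal to Remark \ref{inequality of domination numbers} for $(b)\Rightarrow(c)$, and the same contrapositive diagonal argument for $(c)\Rightarrow(a)$ using a closed discrete countable subset of a non-compact closed member of $\mathcal{B}$ together with Tietze extension. Your write-up is somewhat more explicit than the paper's about why such a closed discrete set exists and why the extension is legitimate, but the ideas are identical.
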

\begin{proof} $(a)\Rightarrow(b)$. Suppose $F_{\mathcal{B}}^s = \{f_n \in C(X):f_n(x) = n ~ \forall x\in X \text{ and } n\in\mathbb{N}\}$. Then by Lemma \ref{bddness of continuous function on compact set}, $F_{\mathcal{B}}^s$ is a strongly dominating subset of $C(X)$ with respect to $\mathcal{B}$ with $|F_{\mathcal{B}}^s| = \aleph_0$.
	
	
	The implication	$(b)\Rightarrow(c)$ follows from the Remark \ref{inequality of domination numbers}.
	
	$(c)\Rightarrow(a)$. Suppose there is a $B\in\mathcal{B}$ such that $\overline{B}$ is not compact. So there exists a subset $D = \{x_n : n\in\mathbb{N}\}$ of $\overline{B}$ which is closed and discrete in $X$. Let $F = \{f_n : n\in\mathbb{N}\}$ be any countable subset of $C(X)$. By Tietze's extension theorem, we can find $g\in C(X)$ such that $g(x_n) = f_n(x_n)+1$. So no $f_n\in F_{\mathcal{B}}$ satisfies $g(x) \leq f_n(x)$ for all $x\in \overline{B}$. Therefore $F$ cannot be a dominating subset of $C(X)$ with respect to $\mathcal{B}$.
	\end{proof} 

Since $\mathcal{B} = \mathcal{P}_0(X)$ has a compact base if and only if $X$ is compact, we have the following corollary to Theorem \ref{bornology has compact base iff strong domination number is aleph}.
\begin{corollary}\label{DHHM theorem} For a metric space $(X,d)$, $dn(X) = \aleph_0$ if and only if $X$ is compact.
\end{corollary}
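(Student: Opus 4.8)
The plan is to derive the corollary by specializing Theorem~\ref{bornology has compact base iff strong domination number is aleph} to the largest bornology $\mathcal{B} = \mathcal{P}_0(X)$. First I would check that $\mathcal{P}_0(X)$ satisfies the hypothesis of that theorem, namely that it admits a closed base: the family of all nonempty closed subsets of $X$ is a base for $\mathcal{P}_0(X)$, since every nonempty $A \subseteq X$ is contained in its closure $\overline{A}$, which is again a nonempty subset of $X$. Thus $\mathcal{P}_0(X)$ is a bornology with a closed base and the theorem is applicable. Moreover, by the notational convention introduced immediately after the definition of the domination number, $dn_{\mathcal{P}_0(X)}(X)$ is precisely $dn(X)$.

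With the hypothesis verified, the equivalence $(a)\Leftrightarrow(c)$ of Theorem~\ref{bornology has compact base iff strong domination number is aleph}, applied to $\mathcal{B} = \mathcal{P}_0(X)$, states that $dn(X) = \aleph_0$ if and only if $\mathcal{P}_0(X)$ has a compact base. Consequently the whole corollary reduces to the single assertion that $\mathcal{P}_0(X)$ has a compact base exactly when $X$ is compact.

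The only remaining step is this last equivalence, which I would carry out directly. If $X$ is compact, then $\{X\}$ is a compact base for $\mathcal{P}_0(X)$, because every nonempty subset of $X$ is contained in the compact set $X$. Conversely, if $\mathcal{P}_0(X)$ has a compact base $\mathcal{B}_0$, then the member $X \in \mathcal{P}_0(X)$ must be contained in some $K \in \mathcal{B}_0$; since also $K \subseteq X$, this forces $K = X$, so $X = K$ is compact.

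I do not anticipate any genuine obstacle: the mathematical content lies entirely in Theorem~\ref{bornology has compact base iff strong domination number is aleph}, and the corollary is a direct instantiation. The only points needing a moment's care are confirming that $\mathcal{P}_0(X)$ is a bornology with a closed base, so that the theorem legitimately applies, and the elementary observation that a compact base for $\mathcal{P}_0(X)$ must contain a compact superset of the whole space $X$, which can only be $X$ itself.
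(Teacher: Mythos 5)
Your proposal is correct and follows exactly the paper's route: the paper derives the corollary by specializing Theorem~\ref{bornology has compact base iff strong domination number is aleph} to $\mathcal{B}=\mathcal{P}_0(X)$ together with the observation that $\mathcal{P}_0(X)$ has a compact base if and only if $X$ is compact. You merely spell out a few details the paper leaves implicit (that $\mathcal{P}_0(X)$ has a closed base, and the proof of the compact-base equivalence), all of which are verified correctly.
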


\begin{remark} Since $dn_{\mathcal{B}}(X)$ is defined for any topological space $X$, the equivalence $(a)\Leftrightarrow (c)$ in Theorem \ref{bornology has compact base iff strong domination number is aleph} holds for any Tychonoff space $X$ by replacing the condition $(a)$ with $(a')$: Every member of $\mathcal{B}$ is relatively pseudocompact subset of $X$ (that is, every $f\in C(X)$ is bounded on each member of $\mathcal{B}$). Therefore the Corollary \ref{DHHM theorem} also holds for any Tychonoff space $X$ provided the condition `$X$ is compact' is replaced with `$X$ is pseudocompact' (see, Proposition 2.1, \cite{DHHM}). \end{remark}

\begin{remark}\label{all domination number are equal} 
	If a bornology $\mathcal{B}$ contains $X$, then $dn_{\mathcal{B}}(X) = dn_{\mathcal{B}}^s(X) = dn(X)$. However, converse fails. For example, if $X=[0,1]$ and $\mathcal{B} = \mathcal{F}$, then by Theorem \ref{bornology has compact base iff strong domination number is aleph} and Corollary \ref{DHHM theorem}, $dn_{\mathcal{F}}(X) = dn_{\mathcal{F}}^s(X) = dn(X)$ but $\mathcal{F}$ does not contain $X$.\end{remark}

Observe that if a bornology $\mathcal{B}$ on $(X,d)$ is stable under small enlargements (that is, for every $B\in\mathcal{B}$, $B^{\delta}\in\mathcal{B}$ for some $\delta>0$), then $dn_{\mathcal{B}}(X) = dn_{\mathcal{B}}^s(X)$. In fact, in this case every dominating subset of $C(X)$ with respect to $\mathcal{B}$ becomes strongly dominating with respect to $\mathcal{B}$. But it follows from  the Remark \ref{all domination number are equal} that converse is not true. 

\noindent In our next result, we show that the equality $dn_{\mathcal{B}}(X) = dn_{\mathcal{B}}^s(X)$ holds even under a weaker assumption that $\mathcal{B}$ is shielded from closed sets. However, we don't know yet if the converse is also true, that is, does $dn_{\mathcal{B}}(X) = dn_{\mathcal{B}}^s(X)$ imply $\mathcal{B}$ is shielded from closed sets?


\begin{theorem}\label{B is shielded from closed sets, then dn_B^s=dn_B} Let $\mathcal{B}$ be a bornology on a metric space $(X,d)$ with a closed base. If $\mathcal{B}$ is shielded from closed sets, then $dn_{\mathcal{B}}(X) = dn_{\mathcal{B}}^s(X)$.
\end{theorem}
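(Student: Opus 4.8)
The plan is to prove only the nontrivial inequality $dn_{\mathcal{B}}^s(X) \le dn_{\mathcal{B}}(X)$, since the reverse inequality $dn_{\mathcal{B}}(X) \le dn_{\mathcal{B}}^s(X)$ is already recorded in Remark \ref{inequality of domination numbers}. To this end I would establish the stronger structural statement that, under the shielding hypothesis, \emph{every} dominating subset of $C(X)$ with respect to $\mathcal{B}$ is automatically a strongly dominating subset of $C(X)$ with respect to $\mathcal{B}$. This at once yields the desired cardinality bound, because a dominating family realizing $dn_{\mathcal{B}}(X)$ then doubles as a strongly dominating family.

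So fix a dominating subset $F_{\mathcal{B}}$ of $C(X)$, an arbitrary $f \in C(X)$, and $B \in \mathcal{B}$. Since $\mathcal{B}$ is shielded from closed sets, choose a shield $B_1 \in \mathcal{B}$ for $B$. The key idea is to apply the domination property not to $f$ itself but to the strictly larger continuous function $f + 1$ on the member $B_1$: there exists $g \in F_{\mathcal{B}}$ with $f(x) + 1 \le g(x)$ for all $x \in B_1$.

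Now consider the closed set $C = \{x \in X : g(x) \le f(x)\}$. On $B_1$ we have $g(x) \ge f(x) + 1 > f(x)$, so $C \cap B_1 = \emptyset$. By the defining property of a shield, there is a $\delta > 0$ with $C \cap B^\delta = \emptyset$, which means $f(x) < g(x)$, and in particular $f(x) \le g(x)$, for every $x \in B^\delta$. Hence $g$ strongly dominates $f$ on $B$ with witness $\delta$, and $F_{\mathcal{B}}$ is strongly dominating. Note that the closed base hypothesis and the identity $A^\delta = \overline{A}^\delta$ guarantee that the shield mechanism interacts correctly with the enlargements.

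The main obstacle is precisely the passage from ordinary domination on the shield $B_1$ to \emph{strong} domination on an enlargement of $B$, and the $+1$ buffer is what makes this work. If one dominated $f$ itself on $B_1$, one would only obtain $g \ge f$ on $B_1$, and the ``bad set'' $\{g \le f\}$ could then touch $B_1$, leaving the shield hypothesis inapplicable. Forcing a strict gap between $g$ and $f$ on $B_1$ pushes the closed bad set entirely off $B_1$, after which the shield converts this separation into uniform strict domination on some $B^\delta$.
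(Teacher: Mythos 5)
Your proof is correct, and it follows the same overall strategy as the paper's: prove that under the shielding hypothesis every dominating subset of $C(X)$ with respect to $\mathcal{B}$ is automatically strongly dominating, using the same key device of applying domination to $f+1$ on a shield $B_1$ of $B$ to create a strict gap. Where you differ is in how the shield is finally invoked. The paper argues by contradiction: it picks points $x_n \in B^{1/n}\setminus B_1$ with $f(x_n) > g(x_n)$ and splits into cases according to whether the set $\{x_n : n \in \mathbb{N}\}$ is closed (then the shield gives $A \cap B^{1/n_0} = \emptyset$, a contradiction) or has a cluster point $x_0$, which is forced into $\overline{B} = B \subseteq B_1$ and yields contradictory inequalities via continuity; this is where the paper uses its WLOG reduction to closed $B$. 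You instead apply the shield definition directly to the closed ``bad set'' $C = \{x : g(x) \le f(x)\}$, which the $+1$ buffer pushes entirely off $B_1$, so the shield immediately gives $C \cap B^\delta = \emptyset$. Your route is cleaner: it eliminates the sequence, the case analysis, and the need to reduce to closed $B$, while using exactly the same hypotheses. (Your closing remark that the closed base hypothesis is what makes the shield mechanism work is not really needed for your argument; it is the paper's case analysis that leans on it.)
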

\begin{proof} The inequality $dn_{\mathcal{B}}(X) \leq dn_{\mathcal{B}}^s(X)$ is immediate. For the reverse inequality $dn_{\mathcal{B}}(X) \geq dn_{\mathcal{B}}^s(X)$, we show that if $\mathcal{B}$ is shielded from closed sets, then every dominating subset of $C(X)$ with respect to $\mathcal{B}$ is also strongly dominating with respect to $\mathcal{B}$. Let $F$ be a dominating subset of $C(X)$ with respect to $\mathcal{B}$. Consider $B\in\mathcal{B}$ and $f\in C(X)$. Without loss of generality, assume $B$ is closed. Suppose $B_1\in\mathcal{B}$ is a shield for $B$. Also $F$ being dominating with respect to $\mathcal{B}$ implies that there exists a function $g\in F$ such that $1+f(x)\leq g(x)$ for all $x\in B_1$. We claim that $f(x)\leq g(x)$ for all $x\in B^\delta$ for some $\delta>0$. Suppose for every $n\in \mathbb{N}$, there exists $x_n\in B^{1/n}\setminus B_1$ such that $f(x_n) > g(x_n)$. Clearly, $A\cap B_1 = \emptyset$, where $A=\{x_n:n\in\mathbb{N}\}$. If $A$ were closed, then $A\cap B^{1/n_0}=\emptyset$ for some $n_0\in\mathbb{N}$ as $B_1$ shields $B$. Which is impossible. Otherwise, $A$ has a cluster point $x_0 \in X$. Then $x_0\in \overline{B}=B\subseteq B_1$ so that $f(x_0) < 1+f(x_0)\leq g(x_0)$. But from $f(x_n) > g(x_n)$ for all $n \in \mathbb{N}$ and the continuity of $f$ and $g$, we also have $f(x_0) \geq g(x_0)$. We arrive at a contradiction.
	\end{proof}

\begin{definition}\normalfont{For a metric space $(X,d)$ and a bornology $\mathcal{B}$ on $X$, the \textit{weight} of $\mathcal{B}$, denoted by $w_{X}(\mathcal{B})$, is defined by
		\begin{align*}
		w_{X}(\mathcal{B}) = \aleph_0 + \inf\{|\mathcal{B}_0| : \mathcal{B}_0 \text{ is a base for } \mathcal{B}\}.
		\end{align*}
		Clearly, $w_{X}(\mathcal{P}_0(X)) = \aleph_0$. Note that $w_{X}(\mathcal{B})$ is denoted by $cf(\mathcal{B})$ in \cite{Cfbafs}. }\end{definition}



We now study the character of the spaces $(C(X),\tau_{\mathcal{B}}^{sw})$ and $(C(X),\tau_{\mathcal{B}}^{w})$. Since both of these are topological groups and therefore homogenous spaces, their character is equal to the character of $f_0$ in the respective topology on $C(X)$. 



We need the following lemma given in \cite{Fswufagt} for the next result. We include its statement here for the readers' convenience.
\begin{lemma}\label{function extension lemma}\textnormal{(Lemma 1.1 on page 7, \cite{Fswufagt})} If $A$ is a $C$-embedded subset of $X$, then any continuous function $f : A \to (0,\infty)$ can be extended to a continuous function $F : X \to (0,\infty)$.
\end{lemma}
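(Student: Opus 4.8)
The plan is to reduce this positive-valued extension problem to the ordinary real-valued extension already guaranteed by $C$-embeddedness, exploiting the fact that the target interval $(0,\infty)$ is homeomorphic to the whole real line $\mathbb{R}$. The point is that $C$-embeddedness is a statement about $\mathbb{R}$-valued functions, so I first transport the problem into $C(A)$, extend there, and then transport the extension back.

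First I would fix a homeomorphism between $(0,\infty)$ and $\mathbb{R}$; the natural choice is the logarithm $\ln : (0,\infty) \to \mathbb{R}$, whose inverse is $\exp : \mathbb{R} \to (0,\infty)$. Given the continuous function $f : A \to (0,\infty)$, the composition $\ln \circ f : A \to \mathbb{R}$ is then a continuous real-valued function on $A$, that is, a genuine element of $C(A)$. Since $A$ is $C$-embedded in $X$, this function extends: there exists $H \in C(X)$ with $H|_A = \ln \circ f$.

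The final step is to push $H$ back through the homeomorphism by setting $F = \exp \circ H : X \to (0,\infty)$. This $F$ is continuous as a composition of continuous maps, it is strictly positive everywhere on $X$ because $\exp$ takes values only in $(0,\infty)$, and on $A$ one recovers $F(a) = \exp(\ln f(a)) = f(a)$. Hence $F$ is a continuous positive-valued extension of $f$, as required.

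The only real obstacle is positivity off $A$. Applying $C$-embeddedness directly to $f$ regarded as a real-valued function would produce some extension $G \in C(X)$ with $G|_A = f$, but $G$ may well take non-positive values on $X \setminus A$, and the naive fixes (truncating, or replacing $G$ by $\max(G,c)$ for a constant $c>0$) would alter $G$ on $A$ wherever $f$ dips below $c$, destroying the extension property. Conjugating by $\ln$ sidesteps this entirely: positivity of $F$ is forced automatically by the range of $\exp$ rather than imposed by hand, so no delicate adjustment on $X \setminus A$ is needed.
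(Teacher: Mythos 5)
Your proof is correct. The paper gives no argument for this lemma at all --- it is quoted verbatim from the cited reference --- and your reduction via the homeomorphism $\ln\colon(0,\infty)\to\mathbb{R}$ and its inverse $\exp$ is exactly the standard proof of that result, including the correct observation that a direct extension of $f$ followed by truncation would fail.
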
 	

\begin{theorem}\label{character of topology of SWC}Let $(X,d)$ be a metric space and let $\mathcal{B}$ be a bornology on $X$ with a closed base. Then $\chi(C(X),\tau_{\mathcal{B}}^{sw}) = dn_{\mathcal{B}}^s(X)\cdot w_{X}(\mathcal{B})$.\end{theorem}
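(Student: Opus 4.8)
Since $(C(X),\tau_{\mathcal B}^{sw})$ is a topological group under addition it is homogeneous, so its character equals the character at $f_0$ and it suffices to produce an optimal local base at $f_0$. For $B\in\mathcal B$ and $\epsilon\in C^+(X)$ I write $W(B,\epsilon)=\{g\in C(X):\exists\,\delta>0\ \forall x\in B^\delta,\ |g(x)|<\epsilon(x)\}$ for the corresponding basic $\tau_{\mathcal B}^{sw}$-neighborhood of $f_0$; these sets form a neighborhood base at $f_0$. As every cardinal in sight is at least $\aleph_0$, we have $dn_{\mathcal B}^s(X)\cdot w_X(\mathcal B)=\max\{dn_{\mathcal B}^s(X),w_X(\mathcal B)\}$, so the plan is to establish the single upper bound $\chi\le dn_{\mathcal B}^s(X)\cdot w_X(\mathcal B)$ together with the two lower bounds $\chi\ge w_X(\mathcal B)$ and $\chi\ge dn_{\mathcal B}^s(X)$.

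For the upper bound I fix a closed base $\mathcal B_0$ of $\mathcal B$ with $|\mathcal B_0|=w_X(\mathcal B)$ and a strongly dominating family $F^s$ with $|F^s|=dn_{\mathcal B}^s(X)$; replacing each $g\in F^s$ by $\max\{g,1\}$ (which preserves strong domination since it only increases values) I may assume $g\ge 1$, so that $1/g\in C^+(X)$. I claim that $\{W(B,1/g):B\in\mathcal B_0,\ g\in F^s\}$ is a local base at $f_0$. Given a basic neighborhood $W(B',\epsilon)$, I first choose $B\in\mathcal B_0$ with $B'\subseteq B$ (so $W(B,\epsilon)\subseteq W(B',\epsilon)$), and then apply strong domination to the function $1/\epsilon\in C^+(X)$ to obtain $\delta_0>0$ and $g\in F^s$ with $1/\epsilon\le g$, hence $1/g\le\epsilon$, on $B^{\delta_0}$. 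Intersecting the two witnessing $\delta$'s then yields $W(B,1/g)\subseteq W(B,\epsilon)\subseteq W(B',\epsilon)$, and this base has cardinality at most $w_X(\mathcal B)\cdot dn_{\mathcal B}^s(X)$.

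Both lower bounds start from an arbitrary local base at $f_0$; refining each of its members to a basic neighborhood inside it, I may assume it has the form $\{W(B_i,\epsilon_i):i\in I\}$ with $|I|=\chi$. The geometric engine for both is the separation fact that $W(B_1,\epsilon_1)\subseteq W(B_2,\epsilon_2)$ forces $\overline{B_2}\subseteq\overline{B_1}$: if $x_0\in\overline{B_2}\setminus\overline{B_1}$ then $r:=d(x_0,B_1)>0$, and a continuous $h$ that vanishes on $B_1^{r/2}$ and takes the value $\epsilon_2(x_0)+1$ at $x_0$ lies in $W(B_1,\epsilon_1)$ (witnessed by $\delta=r/2$) but not in $W(B_2,\epsilon_2)$ (since $x_0\in\overline{B_2}\subseteq B_2^\delta$ for every $\delta$), a contradiction. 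For $\chi\ge w_X(\mathcal B)$ I then argue that the $B_i$'s generate a base: given $B\in\mathcal B$, the inclusion $W(B_i,\epsilon_i)\subseteq W(B,\mathbf 1)$ forces $\overline B\subseteq\overline{B_i}$; enlarging $B_i$ to a closed member $B_i'\in\mathcal B_0$ containing it gives $B\subseteq\overline{B_i}\subseteq B_i'$, so $\{B_i':i\in I\}$ is a base of $\mathcal B$ of size at most $\chi$.

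The delicate part is $\chi\ge dn_{\mathcal B}^s(X)$, which I expect to be the main obstacle. I put $g_i:=1/\epsilon_i\in C(X)$ and $F:=\{n\,g_i:n\in\mathbb N,\ i\in I\}$, a family of size at most $\chi$, and aim to show $F$ is strongly dominating. Given $f\in C(X)$ and $B\in\mathcal B$, it is enough to dominate $\max\{f,1\}$, so I set $\epsilon:=1/\max\{f,1\}\in C^+(X)$ and pick $i$ with $W(B_i,\epsilon_i)\subseteq W(B,\epsilon)$. The crux is to show that $\epsilon_i/\epsilon$ is bounded on some enlargement of $B$, i.e. that there are $\delta>0$ and $n\in\mathbb N$ with $\epsilon_i\le n\,\epsilon$ on $B^\delta$; this at once gives $\max\{f,1\}=1/\epsilon\le n/\epsilon_i=n\,g_i$ on $B^\delta$, as required. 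Suppose instead that for every $m$ there is $x_m\in B^{1/m}$ with $\epsilon_i(x_m)>m\,\epsilon(x_m)$. The pivotal observation is that $\{x_m\}$ can have no convergent subsequence, since any limit would lie in $\overline B$, where continuity of $\epsilon_i$ and $\epsilon$ would keep the ratio $\epsilon_i/\epsilon$ finite, contradicting $\epsilon_i(x_m)/\epsilon(x_m)>m\to\infty$. Hence $\{x_m\}$ is closed and discrete, so around each $x_m$ I can choose a ball on which $\epsilon_i>\epsilon(x_m)$, with the balls pairwise disjoint and locally finite; summing tent functions of height $\epsilon(x_m)$ (or invoking Lemma \ref{function extension lemma}) yields a continuous $h$ with $0\le h<\epsilon_i$ everywhere and $h(x_m)=\epsilon(x_m)$. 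Then $h\in W(B_i,\epsilon_i)$, while $|h(x_m)|=\epsilon(x_m)$ with $x_m\in B^{1/m}$ forces $h\notin W(B,\epsilon)$, contradicting the inclusion. This contradiction supplies the desired $\delta$ and $n$ and completes the proof.
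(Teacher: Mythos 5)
Your proof is correct, and its overall architecture --- the upper bound from a base of $\mathcal{B}$ paired with a strongly dominating family, the lower bounds by extracting both a base and a dominating family from a local base at $f_0$ via Urysohn/Tietze separations --- is the same as the paper's. The one genuine difference is in the hardest step, $dn_{\mathcal{B}}^s(X)\le\chi$. The paper takes the candidate family to be $\{1/\phi_i\}$ itself and must prove a pointwise bound $\phi_i\le 1/\psi$ on some enlargement of $B'$; negating that statement leaves open the possibility that the witnessing points cluster at a point of $B'$, so the paper needs a second, separate argument there (the scaled function $t\phi_i$). You instead enlarge the candidate family to all integer multiples $\{n/\epsilon_i\}$ --- free of charge, since $\aleph_0\cdot\chi=\chi$ --- which weakens what must be proved to a ratio bound $\epsilon_i\le n\,\epsilon$ on some $B^\delta$. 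Negating that makes the ratio blow up along the witnesses, so a cluster point in $\overline{B}$ is immediately impossible and only the closed-and-discrete case survives, which you handle exactly as the paper does via the extension lemma (your locally finite tent-function alternative also works, provided the radii are forced to shrink, e.g.\ $r_m\le 1/m$, to guarantee local finiteness). This buys a one-case argument at the cost of a slightly less canonical dominating family; both are complete proofs.
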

\begin{proof} First we show that $\chi(C(X),\tau_{\mathcal{B}}^{sw}) \leq dn_{\mathcal{B}}^s(X)\cdot w_{X}(\mathcal{B})$. Let $\mathcal{B}_0$ be a base for $\mathcal{B}$ with $|\mathcal{B}_0| = w_{X}(\mathcal{B})$ and let $F_{\mathcal{B}}^s \subseteq C^+(X)$ be a strongly dominating subset of $C(X)$ with respect to $\mathcal{B}$ with $|F_{\mathcal{B}}^s| = dn_{\mathcal{B}}^s(X)$. For any $f\in C(X)$, let
	\begin{align*}
	\mathscr{B}(f) = \{[B,1/\phi]^{sw}(f) : B\in\mathcal{B}_0, \phi\in F_{\mathcal{B}}^s\}.
	\end{align*}
	\noindent Therefore $|\mathscr{B}(f)| \leq dn_{\mathcal{B}}^s(X)\cdot w_{X}(\mathcal{B})$. We show that $\mathscr{B}(f)$ is a local base at $f$ in $(C(X),\tau_{\mathcal{B}}^{sw})$. Let $[B_1,\psi]^{sw}(f)$ be any basic open neighborhood of $f$ in $(C(X),\tau_{\mathcal{B}}^{sw})$ for some $B_1\in\mathcal{B}$ and $\psi\in C^+(X)$. Then there exist a $B\in\mathcal{B}_0$ and a $\phi \in F_{\mathcal{B}}^s$ such that $B_1 \subseteq B$ and $1/\psi \leq \phi$ on $B^\delta$ for some $\delta > 0$.  Thus $[B,1/\phi]^{sw}(f) \subseteq [B_1,\psi]^{sw}(f)$. So $\mathscr{B}(f)$ is a local base at $f$ in $(C(X),\tau_{\mathcal{B}}^{sw})$. Therefore $\chi(C(X),\tau_{\mathcal{B}}^{sw}) \leq dn_{\mathcal{B}}^s(X)\cdot w_{X}(\mathcal{B})$.
	
	To show the reverse inequality $dn_{\mathcal{B}}^s(X)\cdot w_{X}(\mathcal{B}) \leq \chi(C(X),\tau_{\mathcal{B}}^{sw})$,  let $\mathscr{B}(f_0)$ be a local base at $f_0$  with $|\mathscr{B}(f_0)| = \chi(C(X),\tau_{\mathcal{B}}^{sw})$. We can assume that every member of $\mathscr{B}(f_0)$ is of the form $[B,\phi]^{sw}(f_0)$ for some closed $B\in\mathcal{B}$ and $\phi\in C^+(X)$. So let $\mathscr{B}(f_0) = \{[B_i,\phi_i]^{sw}(f_0): i \in I\}$, where $|I| = \chi(C(X),\tau_{\mathcal{B}}^{sw})$ and for each $i\in I$, $B_i\in \mathcal{B}$ is closed in $(X,d)$ and $\phi_i \in C^+(X)$. Define $\mathcal{B}_0 = \{B_i : i \in I\}$ and $F_{\mathcal{B}}^s = \{1/\phi_i : i \in I\}$. To establish the required inequality, it is sufficient to show that $\mathcal{B}_0$ is a base for $\mathcal{B}$ and $F_{\mathcal{B}}^s$ is strongly dominating with respect to $\mathcal{B}$.
	
	\noindent Suppose $\mathcal{B}_0$ is not a base for $\mathcal{B}$. Let $B'\in\mathcal{B}$ such that $B' \nsubseteq B_i$ for any $i \in I$. For every $i\in I$, choose $x_i\in B'\setminus B_i$. Since $B_i$ is closed and $x_i\notin B_i$, we have $d(x_i, B_i) > 0$ for all $i \in I$.  So for each $i \in I$, we can find $\delta_i > 0$ such that $x_i \notin \overline{(B_i)^{\delta_i}}$. Thus for each $i \in I$, by Urysohn's Lemma there is a function $g_i\in C(X)$ such that $g_i(x) = 0$ for all $x\in \overline{(B_i)^{\delta_i}}$ and $g_i(x_i) = 1$. Consequently, $g_i\in [B_i,\phi_i]^{sw}(f_0)$ for each $i \in I$ but $g_i\notin [B',1/2]^{sw}(f_0)$. Hence $[B_i,\phi_i]^{sw}(f_0) \nsubseteq [B',1/2]^{sw}(f_0)$ for any $i\in I$.  It contradicts that $\mathscr{B}(f_0)$ is a local base at $f_0$. So $\mathcal{B}_0$ forms a base for $\mathcal{B}$. Therefore $w_{X}(\mathcal{B}) \leq |I| = |\mathcal{B}_0|$.
	
	To show that $F_{\mathcal{B}}^s$ is a strongly dominating subset of $C(X)$ with respect to $\mathcal{B}$, consider $g\in C(X)$ and $B'\in\mathcal{B}$. Without loss of generality, assume $B'$ is closed. Let $\psi \in C^+(X)$ such that $g(x) \leq \psi(x)$ for all $x\in X$. Then  there exists $i \in I$ such that $[B_i,\phi_i]^{sw}(f_0) \subseteq [B',\frac{1}{2\psi}]^{sw}(f_0)$. We show that $\phi_i \leq \frac{1}{\psi}$ on $(B')^\delta$ for some $\delta>0$. So that $g\leq \psi\leq \frac{1}{\phi_i}$ on $(B')^\delta$. Suppose no such $\delta > 0$ exists, that is, for every $n\in\mathbb{N}$, there exists $x_n\in (B')^{1/n}$ such that $\frac{1}{\psi(x_n)} < \phi_i(x_n)$. Let $D=\{x_n : n\in\mathbb{N}\}$. If $D$ had a cluster point $x_0$, then $x_0\in B'$. Therefore by continuity of functions $\frac{1}{\psi}$ and $\phi_i$, we have $\frac{1}{\psi(x_0)}\leq \phi_i(x_0)$. So $\frac{1}{2\psi(x_0)} < \phi_i(x_0)$. Let $t=\frac{1}{2\psi(x_0)\phi_i(x_0)}$. Clearly, $0 < t < 1$. Thus $t\phi_i \in [B_i,\phi_i]^{sw}(f_0)$. But $t\phi_i \notin [B',\frac{1}{2\psi}]^{sw}(f_0)$ as $t\phi_i(x_0)=\frac{1}{2\psi(x_0)}$. This contradicts that $[B_i,\phi_i]^{sw}(f_0) \subseteq [B',\frac{1}{2\psi}]^{sw}(f_0)$. If $D$ had no cluster point, then $D$ is closed and discrete subset of $X$. By Lemma \ref{function extension lemma}, the continuous function $h':D\to \mathbb{R}$ such that $h'(x_n)=\frac{1}{2\psi(x_n)}$ for all $x_n\in D$ can be extended to a function $h \in C^+(X)$ such that $h|_D=h'$. Define $H = \min\{\frac{\phi_i}{2}, h\}$. Clearly, $H\in C^+(X)$ and $H(x)\leq \frac{\phi_i(x)}{2}$ for all $x\in X$. Thus $H\in [B_i,\phi_i]^{sw}(f_0)$. But $H \notin [B',\frac{1}{2\psi}]^{sw}(f_0)$ as $H(x_n) = \frac{1}{2\psi(x_n)}$ for all $n\in\mathbb{N}$. We again arrive at a contradiction. Therefore $dn_{\mathcal{B}}^s(X) \leq |F_{\mathcal{B}}^s|$.
	\end{proof}

\begin{theorem}\label{character of topology of wc}Let $(X,d)$ be a metric space and let $\mathcal{B}$ be a bornology on $X$ with a closed base. Then $\chi(C(X),\tau_{\mathcal{B}}^{w}) = dn_{\mathcal{B}}(X)\cdot w_{X}(\mathcal{B})$.\end{theorem}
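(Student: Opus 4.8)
The plan is to mirror the proof of Theorem \ref{character of topology of SWC}, replacing the strong Whitney entourages $[\,\cdot\,,\cdot\,]^{sw}$ by the Whitney entourages $[\,\cdot\,,\cdot\,]^{w}$ and the strongly dominating subset by an ordinary dominating subset. Since $(C(X),\tau_{\mathcal{B}}^{w})$ is a topological group, hence homogeneous, its character equals the character at $f_0$, so I only need to produce a local base at $f_0$ of the asserted cardinality and, conversely, to extract a base for $\mathcal{B}$ together with a dominating subset of $C(X)$ from an arbitrary local base at $f_0$.

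For the inequality $\chi(C(X),\tau_{\mathcal{B}}^{w}) \leq dn_{\mathcal{B}}(X)\cdot w_{X}(\mathcal{B})$, I would fix a base $\mathcal{B}_0$ for $\mathcal{B}$ with $|\mathcal{B}_0| = w_{X}(\mathcal{B})$ and a dominating subset $F_{\mathcal{B}} \subseteq C^+(X)$ with $|F_{\mathcal{B}}| = dn_{\mathcal{B}}(X)$, and set $\mathscr{B}(f) = \{[B,1/\phi]^{w}(f) : B\in\mathcal{B}_0,\ \phi\in F_{\mathcal{B}}\}$ for each $f$. Given a basic neighborhood $[B_1,\psi]^{w}(f)$, I choose $B\in\mathcal{B}_0$ with $B_1\subseteq B$ and, applying the dominating property to $1/\psi\in C(X)$ and $B$, a $\phi\in F_{\mathcal{B}}$ with $1/\psi\leq\phi$ on $B$; then $1/\phi\leq\psi$ on $B_1\subseteq B$ yields $[B,1/\phi]^{w}(f)\subseteq[B_1,\psi]^{w}(f)$, so $\mathscr{B}(f)$ is a local base at $f$ of the desired size. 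This step is a direct transcription of the strong case with all $\delta$-enlargements deleted.

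For the reverse inequality I would begin with a local base $\mathscr{B}(f_0) = \{[B_i,\phi_i]^{w}(f_0) : i\in I\}$ at $f_0$ with each $B_i\in\mathcal{B}$ closed, $\phi_i\in C^+(X)$, and $|I| = \chi(C(X),\tau_{\mathcal{B}}^{w})$, and show $\mathcal{B}_0 = \{B_i : i\in I\}$ is a base for $\mathcal{B}$ while $F_{\mathcal{B}} = \{1/\phi_i : i\in I\}$ is dominating. The base claim is essentially identical to Theorem \ref{character of topology of SWC}: if some $B'\in\mathcal{B}$ is contained in no $B_i$, pick $x_i\in B'\setminus B_i$ and use Urysohn's Lemma, applied directly since $B_i$ is closed and $x_i\notin B_i$ with no enlargement required, to build $g_i$ vanishing on $B_i$ with $g_i(x_i)=1$; then $g_i\in[B_i,\phi_i]^{w}(f_0)$ but $g_i\notin[B',1/2]^{w}(f_0)$, contradicting that $\mathscr{B}(f_0)$ is a local base, so $w_{X}(\mathcal{B})\leq|I|$.

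The only step demanding genuine care is that $F_{\mathcal{B}}$ is dominating, and this is precisely where the Whitney case simplifies. Given $g\in C(X)$, $B'\in\mathcal{B}$ (without loss of generality closed), and $\psi\in C^+(X)$ with $g\leq\psi$, I would pick $i$ with $[B_i,\phi_i]^{w}(f_0)\subseteq[B',\tfrac{1}{2\psi}]^{w}(f_0)$ and claim $\phi_i\leq 1/\psi$ on $B'$, whence $g\leq\psi\leq 1/\phi_i$ on $B'$. If instead $\phi_i(x_0)>1/\psi(x_0)$ at some $x_0\in B'$, then the scalar $t=\tfrac{1}{2\psi(x_0)\phi_i(x_0)}$ satisfies $0<t<1$, so $t\phi_i\in[B_i,\phi_i]^{w}(f_0)$, yet $t\phi_i(x_0)=\tfrac{1}{2\psi(x_0)}$ shows $t\phi_i\notin[B',\tfrac{1}{2\psi}]^{w}(f_0)$, a contradiction. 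Unlike Theorem \ref{character of topology of SWC}, no sequence in the enlargements $(B')^{1/n}$, no cluster-point dichotomy, and no appeal to Lemma \ref{function extension lemma} is needed, because the failure of domination can be witnessed at a single point of $B'$ itself; this is the structural reason the Whitney character is governed by $dn_{\mathcal{B}}(X)$ rather than $dn_{\mathcal{B}}^s(X)$. Combining the two inequalities gives the stated equality.
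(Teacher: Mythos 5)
Your proposal is correct and follows essentially the same route as the paper: the upper bound is the same transcription of the strong Whitney argument with enlargements deleted, and the lower bound extracts a base for $\mathcal{B}$ via the same Urysohn construction and witnesses failure of domination at a single point of $B'$ by rescaling $\phi_i$. The only cosmetic difference is your use of the target neighborhood $[B',\tfrac{1}{2\psi}]^{w}(f_0)$ with $t=\tfrac{1}{2\psi(x_0)\phi_i(x_0)}$ where the paper uses $[B',1/\psi]^{w}(f_0)$ with $t=\tfrac{1}{\phi_i(x_0)\psi(x_0)}$; both yield the same contradiction.
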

\begin{proof} The inequality $\chi(C(X),\tau_{\mathcal{B}}^{w}) \leq dn_{\mathcal{B}}(X)\cdot w_X(\mathcal{B})$ can be proved in a manner similar to the proof of the corresponding inequality given in Theorem \ref{character of topology of SWC}.
	
	%
	
	We prove that $dn_{\mathcal{B}}(X)\cdot w_{X}(\mathcal{B}) \leq \chi(C(X),\tau_{\mathcal{B}}^{w})$. Suppose $\mathscr{B}(f_0)$ is a local base at $f_0$ with $|\mathscr{B}(f_0)| = \chi(C(X),\tau_{\mathcal{B}}^{w})$. Like in the proof of Theorem \ref{character of topology of SWC}, we can assume $\mathscr{B}(f_0) = \{[B_i,\phi_i]^{w}(f_0): i \in I\}$, where $|I| = \chi(C(X),\tau_{\mathcal{B}}^{w})$ and for each $i\in I$, $B_i\in \mathcal{B}$ is closed in $(X,d)$ and $\phi_i \in C^+(X)$. Define $\mathcal{B}_0 = \{B_i : i \in I\}$ and $F_{\mathcal{B}} = \{1/\phi_i : i \in I\}$. Then $\mathcal{B}_0$ is a base for $\mathcal{B}$ can be proved using similar steps as in the proof of Theorem \ref{character of topology of SWC}. We show that $F_{\mathcal{B}}$ is dominating with respect to $\mathcal{B}$.
	
	
	To show that $F_{\mathcal{B}}$ is a dominating subset of $C(X)$ with respect to $\mathcal{B}$, let us consider $g\in C(X)$ and $B'\in \mathcal{B}$. Choose $\psi\in C^+(X)$ such that $g(x) \leq \psi(x)$ for all $x\in X$. Find $i \in I$ such that $[B_i,\phi_i]^w(f_0) \subseteq [B',1/\psi]^w(f_0)$. We show that $\phi_i \leq 1/\psi$ on $B'$, so that $g\leq\psi\leq1/\phi_i$ on $B'$. Suppose it is not true, that is, $1/\phi_i(x_0) < \psi(x_0)$ for some $x_0\in B'$. Let $t = \frac{1}{\phi_i(x_0)\psi(x_0)}$. Then $t\phi_i\in[B_i,\phi_i]^w(f_0)$. However, $t\phi_i \notin[B',1/\psi]^w(f_0)$ as $t\phi_i(x_0) = 1/\psi(x_0)$. This contradiction shows that $F_{\mathcal{B}}$ is a dominating subset of $C(X)$ with respect to $\mathcal{B}$. Therefore $dn_{\mathcal{B}}(X) \leq |F_{\mathcal{B}}| \leq \chi(C(X),\tau_{\mathcal{B}}^{w})$.
	\end{proof}	

\begin{corollary}\label{DHHM result1}\textnormal{(\cite{DHHM}, Theorem 2.3)} For a metric space $(X,d)$, $\chi(C(X),\tau^{w}) =dn(X)$.
\end{corollary}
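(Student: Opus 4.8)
The plan is to derive this simply as the special case $\mathcal{B} = \mathcal{P}_0(X)$ of Theorem \ref{character of topology of wc}, so essentially all the work has already been done and what remains is to verify the hypotheses and simplify the resulting cardinal product.

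First I would check that Theorem \ref{character of topology of wc} actually applies with $\mathcal{B} = \mathcal{P}_0(X)$. The family $\mathcal{P}_0(X)$ of all nonempty subsets is a bornology: it covers $X$ and forms an ideal. Moreover it has a closed base, since $X$ is itself a closed member of $\mathcal{P}_0(X)$, so even $\{X\}$ serves as a closed base (equivalently, $\{\overline{B} : B \in \mathcal{P}_0(X)\}$ is a closed base). Thus the hypotheses of Theorem \ref{character of topology of wc} are met.

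Next I would identify each of the three quantities appearing in the formula $\chi(C(X),\tau_{\mathcal{B}}^{w}) = dn_{\mathcal{B}}(X)\cdot w_{X}(\mathcal{B})$ under this specialization. As recorded in the preliminaries, when $\mathcal{B} = \mathcal{P}_0(X)$ the uniformity defining $\tau_{\mathcal{B}}^{w}$ generates exactly the Whitney topology, so $\tau_{\mathcal{P}_0(X)}^{w} = \tau^{w}$. By the remark immediately following the definition of the domination number, $dn_{\mathcal{P}_0(X)}(X) = dn(X)$. Finally, by the definition of the weight of a bornology we have $w_{X}(\mathcal{P}_0(X)) = \aleph_0$. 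Substituting these three identifications into Theorem \ref{character of topology of wc} yields $\chi(C(X),\tau^{w}) = dn(X)\cdot \aleph_0$.

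The last step is a routine piece of cardinal arithmetic: every domination number is defined with a leading $\aleph_0$ summand, so $dn(X) \geq \aleph_0$, and hence $dn(X)\cdot \aleph_0 = dn(X)$. Combining this with the previous display gives $\chi(C(X),\tau^{w}) = dn(X)$, as required. There is no genuine obstacle in this corollary; the only points meriting attention are confirming that $\mathcal{P}_0(X)$ satisfies the closed-base hypothesis and collapsing the product $dn(X)\cdot\aleph_0$ to $dn(X)$.
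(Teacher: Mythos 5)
Your proposal is correct and is exactly the paper's intended argument: the corollary is stated as an immediate specialization of Theorem \ref{character of topology of wc} to $\mathcal{B}=\mathcal{P}_0(X)$, using $\tau_{\mathcal{P}_0(X)}^{w}=\tau^{w}$, $dn_{\mathcal{P}_0(X)}(X)=dn(X)$, $w_X(\mathcal{P}_0(X))=\aleph_0$, and $dn(X)\cdot\aleph_0=dn(X)$. Your extra care in checking that $\mathcal{P}_0(X)$ has a closed base (namely $\{X\}$) is a detail the paper leaves implicit, but there is no difference in substance.
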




Since $dn_{\mathcal{B}}(X)\leq dn_{\mathcal{B}}^{s}(X)\leq dn(X)$ for any bornology $\mathcal{B}$ on a metric space $(X,d)$ with a closed base, we conclude that $dn_{\mathcal{B}}(X)\cdot w_{X}(\mathcal{B}) \leq \chi(C(X),\tau_{\mathcal{B}}^{sw}) \leq dn(X)\cdot w_{X}(\mathcal{B})$. However, $\chi(C(X),\tau_{\mathcal{B}}^{sw}) \neq dn(X)\cdot w_{X}(\mathcal{B})$ in general.

\begin{example}\label{counterexample1_caserta} Let $(X,d)=\mathbb{R}$ with the usual metric and $\mathcal{B}=\mathcal{K}$. Then  $w_{\mathbb{R}}(\mathcal{K})=\aleph_0$ and by Theorem \ref{bornology has compact base iff strong domination number is aleph}, $dn_{\mathcal{K}}(\mathbb{R}) = \aleph_0$. So $\chi(C(\mathbb{R}),\tau_{\mathcal{K}}^{sw})=\aleph_0$ but $dn(\mathbb{R}) > \aleph_0$.\end{example}

\noindent Example \ref{counterexample1_caserta} shows that Proposition 4 and Corollary 2 of \cite{SWc} do not hold in general. But they are true when $X$ is compact.

\begin{corollary}\label{first countability of topology of SWC on bornology}Let $(X,d)$ be a metric space and let $\mathcal{B}$ be a bornology on $X$ with a closed base. Then the following conditions are equivalent:
	\begin{enumerate}
		\item[(a)] $\chi(C(X),\tau_{\mathcal{B}}^{sw})=\aleph_0$;
		\item[(b)] $\chi(C(X),\tau_{\mathcal{B}}^{w})=\aleph_0$;
		\item[(c)] $\mathcal{B}$ has a countable base consisting of compact sets.
\end{enumerate}	\end{corollary}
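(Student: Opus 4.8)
The plan is to reduce all three conditions to statements about the cardinal invariants $dn_{\mathcal{B}}^s(X)$, $dn_{\mathcal{B}}(X)$ and $w_X(\mathcal{B})$, and then feed these into Theorem \ref{bornology has compact base iff strong domination number is aleph}. First I would record the elementary cardinal-arithmetic fact that each of $dn_{\mathcal{B}}^s(X)$, $dn_{\mathcal{B}}(X)$ and $w_X(\mathcal{B})$ is infinite (each is formed by adjoining $\aleph_0$), so a product of any two of them equals its larger factor. Hence $dn_{\mathcal{B}}^s(X)\cdot w_X(\mathcal{B}) = \aleph_0$ holds exactly when $dn_{\mathcal{B}}^s(X) = \aleph_0$ \emph{and} $w_X(\mathcal{B}) = \aleph_0$, and likewise with $dn_{\mathcal{B}}(X)$ in place of $dn_{\mathcal{B}}^s(X)$.

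By Theorem \ref{character of topology of SWC}, condition (a) is then equivalent to the conjunction $dn_{\mathcal{B}}^s(X) = \aleph_0$ and $w_X(\mathcal{B}) = \aleph_0$; by Theorem \ref{character of topology of wc}, condition (b) is equivalent to the conjunction $dn_{\mathcal{B}}(X) = \aleph_0$ and $w_X(\mathcal{B}) = \aleph_0$. Since Theorem \ref{bornology has compact base iff strong domination number is aleph} asserts that, for a bornology with a closed base, $dn_{\mathcal{B}}^s(X) = \aleph_0 \Leftrightarrow dn_{\mathcal{B}}(X) = \aleph_0 \Leftrightarrow \mathcal{B}$ has a compact base, both (a) and (b) collapse to the single statement ``$\mathcal{B}$ has a compact base and $w_X(\mathcal{B}) = \aleph_0$''. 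At this point the equivalence of (a) and (b) is essentially free, and the whole problem is reduced to matching this statement against (c).

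It then remains to show that ``$\mathcal{B}$ has a compact base and $w_X(\mathcal{B}) = \aleph_0$'' is equivalent to (c). One direction is immediate: a countable base consisting of compact sets is simultaneously a compact base and a witness that $w_X(\mathcal{B}) = \aleph_0$. For the converse — the only genuinely constructive step — I would fix a compact base $\mathcal{C}$ for $\mathcal{B}$ together with a countable base $\{D_n : n\in\mathbb{N}\}$ supplied by $w_X(\mathcal{B}) = \aleph_0$, and for each $n$ choose a compact $C_n \in \mathcal{C}$ with $D_n \subseteq C_n$. Given any $B\in\mathcal{B}$, there is some $D_n$ with $B \subseteq D_n \subseteq C_n$, so $\{C_n : n\in\mathbb{N}\}$ is a countable base consisting of compact sets, yielding (c). No step presents a serious obstacle; the only point requiring care is this last merging of a compact base with a countable base, which rests on the observation that enlarging each member of a countable base to a compact superset preserves cofinality in $\mathcal{B}$.
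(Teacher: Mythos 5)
Your proposal is correct and follows exactly the route the paper intends (the corollary is stated without an explicit proof, as an immediate consequence of Theorems \ref{character of topology of SWC}, \ref{character of topology of wc} and \ref{bornology has compact base iff strong domination number is aleph}): reduce (a) and (b) via the character formulas to ``$dn_{\mathcal{B}}^s(X)=\aleph_0$ (resp.\ $dn_{\mathcal{B}}(X)=\aleph_0$) and $w_X(\mathcal{B})=\aleph_0$'', identify both with ``compact base plus countable base'' via Theorem \ref{bornology has compact base iff strong domination number is aleph}, and then merge the two bases into a single countable compact base. The merging step you single out is indeed the only point needing an argument, and your argument for it is correct.
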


\section{More Cardinal Invariants}
The main aim of this section is to study tightness and index of narrowness of  $(C(X),\tau_{\mathcal{B}}^{sw})$ and $(C(X),\tau_{\mathcal{B}}^{w})$. Besides this, we also characterize the weight and network weight of these spaces.
 

\subsection{Tightness and Network weight} \hfill \vspace{0.2cm}

In order to study the tightness of these spaces, we recall the following definitions.   

\begin{definitions}\normalfont{Let $(X,d)$ be a metric space and let $\mathcal{B}$ be a bornology on $X$. A collection $\mathcal{G}$ of open subsets of $X$ is called an \textit{open} $\mathcal{B}$-\textit{cover} of $X$ if for every $B\in\mathcal{B}$, there exists a $G\in\mathcal{G}$ such that $B\subseteq G$. Then the $\mathcal{B}$-\textit{Lindel\"{o}f degree} of $X$ is defined by
\begin{align*}L(X,\mathcal{B}) = &\aleph_0 + \min\{\mathfrak{m}: \text{every open } \mathcal{B}\text{-cover of } X \text{ has an open } \mathcal{B}\text{-subcover of }\\ &X \text{ with cardinality } \leq \mathfrak{m}\}.\end{align*}

A collection $\mathcal{G}$ of open subsets of $X$ is called a \textit{strong open} $\mathcal{B}$-\textit{cover} of $X$ if for every $B\in\mathcal{B}$, there exist a $G\in\mathcal{G}$ and a $\delta>0$ such that $B^{\delta}\subseteq G$. Then the \textit{strong} $\mathcal{B}$-\textit{Lindel\"{o}f degree} of $X$ is defined by
		\begin{align*}L^s(X,\mathcal{B}) = &\aleph_0 + \min\{\mathfrak{m}: \text{every strong  open } \mathcal{B}\text{-cover of } X \text{ has a strong open }\\ &\mathcal{B}\text{-subcover of } X \text{ with cardinality } \leq \mathfrak{m}\}.\end{align*}
	
In general, we have $L^s(X,\mathcal{B})\leq L(X,\mathcal{B}) \leq w_X(\mathcal{B})$ (Proposition 2.2, \cite{Cfbafs}). Consequently, if $\mathcal{B}$ has a countable base, then $L^s(X,\mathcal{B})= L(X,\mathcal{B})= \aleph_0$.}\end{definitions}

But $L^s(X,\mathcal{B})= L(X,\mathcal{B})$ also holds for an important class of bornologies as shown in the next result.
 		
\begin{proposition}\label{B is shielded from closed sets, then L^s(X,B)=L(X,B)} Let $\mathcal{B}$ be a bornology on a metric space $(X,d)$ with a closed base. If $\mathcal{B}$ is shielded from closed sets, then $L^s(X,\mathcal{B}) = L(X,\mathcal{B})$.
\end{proposition}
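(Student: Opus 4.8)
The plan is to establish the nontrivial inequality $L(X,\mathcal{B}) \le L^s(X,\mathcal{B})$, since the reverse inequality $L^s(X,\mathcal{B}) \le L(X,\mathcal{B})$ is already recorded in the excerpt (Proposition 2.2 of \cite{Cfbafs}). The entire argument rests on a single observation: under the hypothesis that $\mathcal{B}$ is shielded from closed sets, every open $\mathcal{B}$-cover of $X$ is in fact a \emph{strong} open $\mathcal{B}$-cover. This is the exact analogue, at the level of covers, of the fact proved in Theorem \ref{B is shielded from closed sets, then dn_B^s=dn_B} that shields upgrade ordinary domination to strong domination.

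To see the observation, let $\mathcal{G}$ be an open $\mathcal{B}$-cover and fix $B \in \mathcal{B}$. Choose a shield $B_1 \in \mathcal{B}$ for $B$, which exists precisely because $\mathcal{B}$ is shielded from closed sets. Since $\mathcal{G}$ covers $B_1$ in the $\mathcal{B}$-sense, there is some $G \in \mathcal{G}$ with $B_1 \subseteq G$. Now $C := X \setminus G$ is closed and disjoint from $B_1$, so the defining property of a shield yields a $\delta > 0$ with $C \cap B^\delta = \emptyset$, that is, $B^\delta \subseteq G$. As $B \in \mathcal{B}$ was arbitrary, $\mathcal{G}$ is a strong open $\mathcal{B}$-cover.

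Granting this, the conclusion follows quickly. Let $\mathfrak{m} = L^s(X,\mathcal{B})$ and let $\mathcal{G}$ be any open $\mathcal{B}$-cover. By the observation, $\mathcal{G}$ is a strong open $\mathcal{B}$-cover, so by the definition of $L^s(X,\mathcal{B})$ it admits a strong open $\mathcal{B}$-subcover $\mathcal{G}' \subseteq \mathcal{G}$ with $|\mathcal{G}'| \le \mathfrak{m}$. Since $B \subseteq B^\delta \subseteq G$ whenever $B^\delta \subseteq G$, the family $\mathcal{G}'$ is in particular an ordinary open $\mathcal{B}$-subcover, and its cardinality is at most $\mathfrak{m}$. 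Hence every open $\mathcal{B}$-cover has an open $\mathcal{B}$-subcover of size at most $\mathfrak{m}$, which gives $L(X,\mathcal{B}) \le L^s(X,\mathcal{B})$ and, combined with the general inequality, completes the proof.

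The only real content is the first step, and even there the work is light: the main point is to recognize that one should feed the \emph{shield} $B_1$ (rather than $B$ itself) to the cover $\mathcal{G}$, and then let the complement $X \setminus G$ play the role of the closed set in the definition of a shield. I do not anticipate any genuine obstacle. In particular, the closed-base hypothesis is used only to place us in the standard setting, and no enlargement-stability of $\mathcal{B}$ is required, which is exactly why the shield condition is the correct weakening of stability under small enlargements for this statement.
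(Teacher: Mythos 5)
Your proof is correct and follows essentially the same approach as the paper: both arguments feed the shield $B_1$ to the cover and use the closed set $X\setminus G$ in the definition of a shield to upgrade $B\subseteq G$ to $B^{\delta}\subseteq G$. The only (cosmetic) difference is that the paper first passes to the subfamily of cover elements containing shields before extracting the small subcover, whereas you observe directly that the whole cover $\mathcal{G}$ is already a strong open $\mathcal{B}$-cover, which is a slight streamlining.
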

\begin{proof}  We only need to show  $L(X,\mathcal{B}) \leq L^s(X,\mathcal{B})$. Let $\mathcal{G}$ be an open  $\mathcal{B}$-cover of $X$. So for every $B\in\mathcal{B}$, there is a $G_B\in\mathcal{G}$ such that $B\subseteq G_B$.
Let $\mathcal{G}_1=\{G_{B'}\in \mathcal{G} : B'\in\mathcal{B} \text{ is a shield for some } B\in \mathcal{B}\}$. Let $B\in\mathcal{B}$ and $B'\in\mathcal{B}$ is a shield of $B$. So there is a $G_{B'}\in\mathcal{G}_1$ such that $B\subseteq B'\subseteq G_{B'}$. Consequently, $B^{\delta}\cap(X\setminus G_{B'})=\emptyset$ for some $\delta>0$, and thus $B^{\delta}\subseteq G_{B'}$. Therefore $\mathcal{G}_1$ is a strong open $\mathcal{B}$-cover of $X$. If $\mathcal{G}_0$ is a strong open $\mathcal{B}$-subcover  of $\mathcal{G}_1$ with $|\mathcal{G}_0|\leq L^s(X,\mathcal{B})$, then $\mathcal{G}_0$ is also an open $\mathcal{B}$-subcover of $\mathcal{G}$. Hence $L(X,\mathcal{B}) \leq L^s(X,\mathcal{B})$.     
\end{proof}	

The following example shows that the converse of the above proposition need not be true.

\begin{example} Let $X=\mathbb{R}$ with the usual metric. Let $\mathcal{B}$ be a bornology on $\mathbb{R}$ with a base $\mathcal{B}_0=\{[a,b]\cup\mathbb{N}: a,b \text{ are integers with } a<b\}$. Clearly, $\mathcal{B}_0$ is countable. So $L^s(X,\mathcal{B}) = L(X,\mathcal{B}) = \aleph_0$. For any $[a,b]\cup\mathbb{N} \in \mathcal{B}_0$, if we take the closed set $A=\{n+\frac{1}{n}: n\in\mathbb{N}\}\setminus [a,b]$, then $A\cap ([a,b]\cup\mathbb{N})=\emptyset$. But for each $\delta>0$, $\mathbb{N}^\delta\cap A\neq\emptyset$. So $\mathbb{N}\in\mathcal{B}$ has no shield in $\mathcal{B}$. Consequently, $\mathcal{B}$ is not shielded from closed sets.     
\end{example}	
		
	
\begin{proposition}\label{dn_B and L_B is less than tightness}Let $(X,d)$ be a metric space and let $\mathcal{B}$ be a bornology on $X$ with a closed base. Then the following inequalities hold:
	\begin{enumerate}[(i)]
		\item $dn_{\mathcal{B}}^s(X)\leq t(C(X),\tau_{\mathcal{B}}^{sw})$; and $dn_{\mathcal{B}}(X)\leq t(C(X),\tau_{\mathcal{B}}^{w})$.
		\item $L^s(X,\mathcal{B})\leq t(C(X),\tau_{\mathcal{B}}^{sw})$; and $L(X,\mathcal{B})\leq t(C(X),\tau_{\mathcal{B}}^{w})$.
	\end{enumerate} 
\end{proposition}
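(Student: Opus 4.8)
The plan is to exploit that $(C(X),\tau_{\mathcal{B}}^{sw})$ and $(C(X),\tau_{\mathcal{B}}^{w})$ are topological groups, so their tightness is attained at $f_0$; it therefore suffices to work with sets $C\subseteq C(X)$ having $f_0$ in their closure. For each of the four inequalities I would exhibit a single such set $C$, engineered so that any $C_0\subseteq C$ with $f_0\in\overline{C_0}$ transforms, by a simple pointwise recipe, into an object of the required kind (a (strongly) dominating family in (i), a (strong) open $\mathcal{B}$-subcover in (ii)) of cardinality $\le|C_0|$. Since the definition of tightness supplies such a $C_0$ of size at most the tightness, the left-hand cardinal is bounded by the tightness.

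For (i), take $C=C^{+}(X)$. Then $f_0\in\overline{C}$ in both topologies, since for any basic neighborhood $[B,\epsilon]^{sw}(f_0)$ the function $\epsilon/2\in C^{+}(X)$ lies in it. Let $C_0$ be a witness of size $\le t(C(X),\tau_{\mathcal{B}}^{sw})$; I claim $\{1/g:g\in C_0\}$ is strongly dominating. Indeed, given $f\in C(X)$ and $B\in\mathcal{B}$, put $\psi=|f|+1\in C^{+}(X)$; since $C_0$ meets $[B,1/\psi]^{sw}(f_0)$, there are $g\in C_0$ and $\delta>0$ with $0<g<1/\psi$ on $B^{\delta}$, whence $1/g>\psi\ge f$ on $B^{\delta}$. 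This gives $dn_{\mathcal{B}}^{s}(X)\le t(C(X),\tau_{\mathcal{B}}^{sw})$. The Whitney statement is identical after deleting every $\delta$, as neighborhoods $[B,\cdot]^{w}$ control values on $B$ itself.

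For (ii), start from an arbitrary strong open $\mathcal{B}$-cover $\mathcal{G}$. For each $G\in\mathcal{G}$ and each $B\in\mathcal{B}$ for which some enlargement satisfies $B^{\delta}\subseteq G$, I would use Urysohn's lemma to pick $u_{G,B}\in C(X)$ that is $0$ on $\overline{B^{\delta/2}}$ and $1$ on $X\setminus G$, and let $C$ be the family of all such functions. That $f_0\in\overline{C}$ follows because, for a given $[B_0,\epsilon]^{sw}(f_0)$, the strong cover yields $G,\delta$ with $B_0^{\delta}\subseteq G$, and the corresponding $u$ vanishes on $B_0^{\delta/2}$. Taking a witness $C_0$ and letting $\mathcal{G}_0$ be the set of those $G$ occurring as first index of some member of $C_0$ (so $|\mathcal{G}_0|\le|C_0|$), one checks $\mathcal{G}_0$ is a strong subcover: for $B_0\in\mathcal{B}$, some $u_{G,B}\in C_0$ lies in $[B_0,1/2]^{sw}(f_0)$, i.e.\ $u_{G,B}<1/2$ on $B_0^{\delta'}$; as $u_{G,B}\equiv 1$ off $G$, this forces $B_0^{\delta'}\subseteq G\in\mathcal{G}_0$. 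Hence $L^{s}(X,\mathcal{B})\le t(C(X),\tau_{\mathcal{B}}^{sw})$; the non-strong version is the same with $B^{\delta}\subseteq G$ replaced by $B\subseteq G$ and $u=0$ imposed on $B$ directly.

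The step I expect to be the real obstacle is guaranteeing, in (ii), that $f_0$ genuinely lies in $\overline{C}$: the functions must be bounded away from $0$ on $X\setminus G$ (so that smallness on an enlargement of $B_0$ certifies containment in $G$ and produces the subcover), yet simultaneously vanish on enlargements of each $B$ (so that $f_0$ is approached). These requirements conflict, because $B^{\delta}\subseteq G$ does not separate $B^{\delta}$ from $X\setminus G$ (their closures may meet $\partial G$), and in the Whitney case $d(\cdot,X\setminus G)$ need not be bounded below on $B$, so a single distance-decay function will not do. The remedy is to pass to the half-enlargement: since $\overline{B^{\delta/2}}\subseteq B^{\delta}\subseteq G$, the sets $\overline{B^{\delta/2}}$ and $X\setminus G$ are disjoint closed sets, and Urysohn's lemma (metric spaces being normal) yields the required function; the closed-base hypothesis is what lets me take the relevant $B$ closed when carrying out the separation in the non-strong case.
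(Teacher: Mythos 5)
Your proposal is correct and follows essentially the same route as the paper: for (i) you use $C^{+}(X)$ with $f_0$ in its closure and invert the witnessing functions to get a (strongly) dominating family, and for (ii) you build Urysohn functions vanishing on $\overline{B^{\delta/2}}$ and equal to $1$ off $G$, exactly as in the paper's proof (the only cosmetic difference being that you index by pairs $(G,B)$ where the paper fixes one $G_B$ per $B$). Your closing discussion of why the half-enlargement $\overline{B^{\delta/2}}\subseteq B^{\delta}\subseteq G$ makes the Urysohn separation legitimate, and why the closed base is needed in the non-strong case, correctly fills in the points the paper leaves implicit.
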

\begin{proof} In both parts, the proofs for $\tau_{\mathcal{B}}^{sw}$ and $\tau_{\mathcal{B}}^{w}$ are similar. So we give proof only for $\tau_{\mathcal{B}}^{sw}$.  
	
$(i)$.  It is easy to see that $f_0$ is a $\tau_{\mathcal{B}}^{sw}$-closure point of $C^+(X)$. So there is a subset $D_0$ of $C^+(X)$ with $|D_0|\leq t(f_0, \tau_{\mathcal{B}}^{sw})$ such that $f_0$ is a $\tau_{\mathcal{B}}^{sw}$-closure point of $D_0$. We now show that the set $E_{\mathcal{B}}^s=\{\frac{1}{\phi}\in C^+(X): \phi\in D_0\}$ is a strongly dominating subset of $C(X)$ with respect to $\mathcal{B}$. Let $g\in C(X)$ and $B\in\mathcal{B}$. Thus for $\psi=1+|g|$, there is a member $\phi\in D_0\cap [B,\frac{1}{\psi}]^{sw}(f_0)$. Consequently, we have $g(x)<1/\phi(x)$ for all $x\in B^\delta$ for some $\delta>0$. Therefore $dn_{\mathcal{B}}^s(X)\leq t(C(X),\tau_{\mathcal{B}}^{sw})$.
	
$(ii)$. Let $\mathcal{G}$ be a strong open $\mathcal{B}$-cover of $X$. So for each $B\in\mathcal{B}$, there exist some $G_B\in\mathcal{G}$ and $\delta_B>0$ such that $B^{\delta_B}\subseteq G_B$. For every $B\in\mathcal{B}$, we can find a function $f_B\in C(X)$ such that $f_B(x)=0$ for all $x\in \overline{B^{\delta_B/2}}$ and $f_B(x)=1$ for all $x\in X\setminus G_B$. It is easy to see that $f_0$ is a $\tau_{\mathcal{B}}^{sw}$-closure point of $E$, where $E=\{f_B : B\in \mathcal{B}\}$. So there is a subset $E_0=\{f_{B_i}: i\in I\}$ of $E$ with $|I|\leq t(f_0,\tau_{\mathcal{B}}^{sw})$ such that $f_0$ is a $\tau_{\mathcal{B}}^{sw}$-closure point of $E_0$. Take $\mathcal{G}_0=\{G_{B_i} : i\in I\}\subseteq \mathcal{G}$. Clearly, $|\mathcal{G}_0|\leq t(f_0,\tau_{\mathcal{B}}^{sw})$. So for every $B'\in\mathcal{B}$, there exists $j\in I$ such that $f_{B_j}\in E_0\cap [B',\frac{1}{2}]^{sw}(f_0)$. Thus $(B')^\delta\subseteq G_{B_j}$ for some $\delta>0$. Therefore $\mathcal{G}_0$ is a strong open $\mathcal{B}$-cover of $X$. Hence $L^s(X,\mathcal{B})\leq t(C(X),\tau_{\mathcal{B}}^{sw})$. 	
\end{proof}

\begin{proposition}\label{tightness is less than dn_B L_B} Let $(X,d)$ be a metric space and let $\mathcal{B}$ be a bornology on $X$ with a closed base. Then the following inequalities hold:
	\begin{enumerate}[(i)]
		\item $t(C(X),\tau_{\mathcal{B}}^{sw}) \leq dn_{\mathcal{B}}^s(X)\cdot L^s(X,\mathcal{B})$.
		\item $t(C(X),\tau_{\mathcal{B}}^{w}) \leq dn_{\mathcal{B}}(X)\cdot L(X,\mathcal{B})$.
	\end{enumerate} 
\end{proposition}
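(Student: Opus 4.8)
The plan is to exploit homogeneity and reduce everything to the point $f_0$: since $(C(X),\tau_{\mathcal{B}}^{sw})$ is a topological group, $t(C(X),\tau_{\mathcal{B}}^{sw})=t(f_0,\tau_{\mathcal{B}}^{sw})$, and part (ii) is entirely parallel to part (i) with $sw$, $dn_{\mathcal{B}}^s$, $L^s$ replaced by $w$, $dn_{\mathcal{B}}$, $L$ and every $\delta$-enlargement $B^\delta$ replaced by $B$ itself, so I treat only (i). Write $\kappa=dn_{\mathcal{B}}^s(X)$ and $\lambda=L^s(X,\mathcal{B})$, and fix a strongly dominating subset $F_{\mathcal{B}}^s$ of $C(X)$ with respect to $\mathcal{B}$ with $|F_{\mathcal{B}}^s|=\kappa$; replacing each $g$ by $|g|+1$, I may assume $F_{\mathcal{B}}^s\subseteq C^+(X)$. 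Given any $C\subseteq C(X)$ with $f_0\in\overline{C}$, the goal is to produce $C_0\subseteq C$ with $|C_0|\le\kappa\cdot\lambda$ and $f_0\in\overline{C_0}$.

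For the construction, first fix $\phi\in F_{\mathcal{B}}^s$. For each $B\in\mathcal{B}$, since $[B,1/\phi]^{sw}(f_0)$ is a neighborhood of $f_0$ and $f_0\in\overline{C}$, I may choose a witness $g_{B,\phi}\in C\cap[B,1/\phi]^{sw}(f_0)$ and set $G_{B,\phi}=\{x\in X:|g_{B,\phi}(x)|<1/\phi(x)\}$. This set is open, and the fact that $g_{B,\phi}$ lies in that $sw$-neighborhood says precisely that $B^\delta\subseteq G_{B,\phi}$ for some $\delta>0$; hence $\{G_{B,\phi}:B\in\mathcal{B}\}$ is a strong open $\mathcal{B}$-cover of $X$. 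By the definition of $\lambda=L^s(X,\mathcal{B})$ I extract a strong open $\mathcal{B}$-subcover indexed by some $\mathcal{B}_\phi\subseteq\mathcal{B}$ with $|\mathcal{B}_\phi|\le\lambda$, and put $C_\phi=\{g_{B,\phi}:B\in\mathcal{B}_\phi\}$. Finally I set $C_0=\bigcup_{\phi\in F_{\mathcal{B}}^s}C_\phi$, so that $|C_0|\le\kappa\cdot\lambda$ as required.

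It remains to verify $f_0\in\overline{C_0}$, and this is the step where the two cardinal invariants must be knitted together. Let $[B',\psi]^{sw}(f_0)$ be an arbitrary basic neighborhood of $f_0$. Applying strong domination to the continuous function $1/\psi$ and the set $B'$, I obtain $\phi\in F_{\mathcal{B}}^s$ and $\delta_0>0$ with $1/\psi\le\phi$, equivalently $1/\phi\le\psi$, on $(B')^{\delta_0}$. For this same $\phi$ the subcover property supplies $B\in\mathcal{B}_\phi$ and $\delta_1>0$ with $(B')^{\delta_1}\subseteq G_{B,\phi}$. Taking $\delta=\min\{\delta_0,\delta_1\}$, every $x\in(B')^\delta$ satisfies both $|g_{B,\phi}(x)|<1/\phi(x)$ (since $x\in G_{B,\phi}$) and $1/\phi(x)\le\psi(x)$, whence $|g_{B,\phi}(x)|<\psi(x)$; thus $g_{B,\phi}\in C_0\cap[B',\psi]^{sw}(f_0)$. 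As the neighborhood was arbitrary, $f_0\in\overline{C_0}$.

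I expect the main obstacle to be exactly this last combination. The radius $\delta_1$ produced by the strong Lindel\"of thinning and the radius $\delta_0$ produced by strong domination are a priori unrelated, and one must pass to their minimum so that the covering inclusion and the domination inequality hold simultaneously on a single enlargement of $B'$. The conceptual device that makes the fixed families $\{G_{B,\phi}\}$ reusable across every neighborhood is the reduction of the arbitrary gauge $\psi$ to a member $\phi$ of the fixed dominating family via $1/\psi$; without first passing through $F_{\mathcal{B}}^s$ one would be forced to index the witnesses by all of $\mathcal{B}$ and $C^+(X)$, losing control of the cardinality.
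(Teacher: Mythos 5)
Your proposal is correct and follows essentially the same route as the paper: for each $\phi$ in a fixed strongly dominating family you build the strong open $\mathcal{B}$-cover $\{(\phi\cdot g)^{-1}(-1,1)\}$ from witnesses in $C$, thin it via $L^s(X,\mathcal{B})$, union over $\phi$, and then verify closure by first dominating $1/\psi$ by some $\phi$ and intersecting the two enlargements. The only cosmetic difference is that the paper indexes the cover by all of $D$ rather than by one chosen witness per $B$; the argument is otherwise identical.
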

\begin{proof} $(i)$. Let $D\subseteq C(X)$ such that $f_0$ is a $\tau_{\mathcal{B}}^{sw}$-closure point of $D$ and let $F_{\mathcal{B}}^s \subseteq C^+(X)$ be a strongly dominating subset of $C(X)$ with respect to $\mathcal{B}$ with $|F_{\mathcal{B}}^s|=dn_{\mathcal{B}}^s$. For every $\phi\in F_{\mathcal{B}}^s$, consider the set $\mathcal{A}^{\phi}=\{(\phi \cdot f)^{-1}(-1,1): f\in D\}$. Let $B\in\mathcal{B}$. Since $f_0$ is a $\tau_{\mathcal{B}}^{sw}$-closure point of $D$, there exists a function $f_B\in D\cap [B,\frac{1}{\phi}]^{sw}(f_0)$. So we have $B^\delta \subseteq (\phi \cdot f_B)^{-1}(-1,1)$ for some $\delta>0$. Therefore $\mathcal{A}^{\phi}$ is a strong open $\mathcal{B}$-cover of $X$ for each $\phi\in F_{\mathcal{B}}^s$. So there exists a subset $\mathcal{A}^{\phi}_0=\{(\phi \cdot f_i)^{-1}(-1,1): i\in I\}$ of $\mathcal{A}^{\phi}$ with $|I|\leq L^s(X,\mathcal{B})$, which is a strong open $\mathcal{B}$-cover of $X$. Take $D_{\phi}=\{f_i\in D: i \in I\}$. Clearly, $|D_{\phi}|\leq L^s(X,\mathcal{B})$. Thus $D_0=\cup_{\phi\in F_{\mathcal{B}}^s} D_{\phi}$ is a subset of $D$ with $|D_0|\leq dn_{\mathcal{B}}^s(X)\cdot L^s(X,\mathcal{B})$. We now show that $f_0$ is a $\tau_{\mathcal{B}}^{sw}$-closure point of $D_0$. Let $[B,\epsilon]^{sw}(f_0)$ be a basic open neighborhood of $f_0$ in $(C(X),\tau_{\mathcal{B}}^{sw})$ for some $B\in\mathcal{B}$ and $\epsilon\in C^+(X)$. So there exist $\phi_B \in F_{\mathcal{B}}^s$ and $\delta_B>0$ such that $\epsilon(x)\geq \frac{1}{\phi_B(x)}$ for all $x\in B^{\delta_B}$. Since $\mathcal{A}^{\phi}_0$ is a strong open $\mathcal{B}$-cover of $X$, there exists $j\in I$ such that $B^{\delta_1}\subseteq (\phi_B \cdot f_j)^{-1}(-1,1)$ for some $\delta_1>0$. Thus $f_j\in D_{\phi_B}\cap [B,\epsilon]^{sw}(f_0)$. Therefore $t(C(X),\tau_{\mathcal{B}}^{sw}) \leq dn_{\mathcal{B}}^s(X)\cdot L^s(X,\mathcal{B})$. 
	
	$(ii)$. It can be proved in a manner similar to part $(i)$.  
\end{proof}
%

The next theorem follows from Propositions \ref{dn_B and L_B is less than tightness} and \ref{tightness is less than dn_B L_B}.

\begin{theorem}\label{tightness of topology of SWC} Let $\mathcal{B}$ be a bornology on a metric space $(X,d)$ with a closed base. Then 
	\begin{enumerate}[(i)]
		\item $t(C(X),\tau_{\mathcal{B}}^{sw}) = dn_{\mathcal{B}}^s(X)\cdot L^s(X,\mathcal{B})$.
		\item $t(C(X),\tau_{\mathcal{B}}^{w}) = dn_{\mathcal{B}}(X)\cdot L(X,\mathcal{B})$.
	\end{enumerate} 
\end{theorem}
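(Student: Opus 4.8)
The plan is to obtain Theorem \ref{tightness of topology of SWC} as an immediate consequence of the two preceding propositions, exactly as the sentence preceding the statement announces. Part (i) asserts the equality $t(C(X),\tau_{\mathcal{B}}^{sw}) = dn_{\mathcal{B}}^s(X)\cdot L^s(X,\mathcal{B})$, and part (ii) the corresponding equality for $\tau_{\mathcal{B}}^{w}$. Since an equality of cardinals is proved by establishing both inequalities, I would simply combine the two halves that are already available. For part (i), Proposition \ref{dn_B and L_B is less than tightness} gives both $dn_{\mathcal{B}}^s(X)\leq t(C(X),\tau_{\mathcal{B}}^{sw})$ (its part (i)) and $L^s(X,\mathcal{B})\leq t(C(X),\tau_{\mathcal{B}}^{sw})$ (its part (ii)). Taking the supremum of these two cardinals, and recalling that $\kappa\cdot\lambda=\max\{\kappa,\lambda\}$ for the product of infinite cardinals, yields $dn_{\mathcal{B}}^s(X)\cdot L^s(X,\mathcal{B})\leq t(C(X),\tau_{\mathcal{B}}^{sw})$. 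The reverse inequality $t(C(X),\tau_{\mathcal{B}}^{sw}) \leq dn_{\mathcal{B}}^s(X)\cdot L^s(X,\mathcal{B})$ is precisely Proposition \ref{tightness is less than dn_B L_B}(i). Combining the two gives the equality.

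Part (ii) is handled identically, replacing the strong invariants by their ordinary counterparts: Proposition \ref{dn_B and L_B is less than tightness} yields $dn_{\mathcal{B}}(X)\leq t(C(X),\tau_{\mathcal{B}}^{w})$ and $L(X,\mathcal{B})\leq t(C(X),\tau_{\mathcal{B}}^{w})$, hence $dn_{\mathcal{B}}(X)\cdot L(X,\mathcal{B})\leq t(C(X),\tau_{\mathcal{B}}^{w})$, while Proposition \ref{tightness is less than dn_B L_B}(ii) supplies the reverse inequality. One technical point worth a single sentence of justification is the passage from the two separate lower bounds to a lower bound on the product: this uses only the standard fact that all the quantities involved are infinite cardinals (each carries the $\aleph_0+$ normalization built into its definition, and tightness is defined with the same normalization), so that the product coincides with the maximum and is therefore bounded above by $t(C(X),\tau_{\mathcal{B}}^{sw})$ as soon as each factor is.

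There is essentially no obstacle here; the substance of the theorem resides entirely in the two propositions that have already been proved, and the present statement is a bookkeeping combination of them. The only thing to be careful about is the infinite-cardinal arithmetic identity $\kappa\cdot\lambda=\max\{\kappa,\lambda\}$, which is what lets two simultaneous lower bounds on a cardinal be merged into a lower bound on their product; this is routine and can be invoked without proof. Accordingly I would write the argument in just a few lines, citing Proposition \ref{dn_B and L_B is less than tightness} for the lower bounds and Proposition \ref{tightness is less than dn_B L_B} for the upper bounds, as follows.

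\begin{proof}
We prove $(i)$; the proof of $(ii)$ is analogous. By Proposition \ref{dn_B and L_B is less than tightness}, we have both $dn_{\mathcal{B}}^s(X)\leq t(C(X),\tau_{\mathcal{B}}^{sw})$ and $L^s(X,\mathcal{B})\leq t(C(X),\tau_{\mathcal{B}}^{sw})$. Since all the cardinals involved are infinite, the product of two infinite cardinals equals their maximum, and therefore $dn_{\mathcal{B}}^s(X)\cdot L^s(X,\mathcal{B}) = \max\{dn_{\mathcal{B}}^s(X), L^s(X,\mathcal{B})\} \leq t(C(X),\tau_{\mathcal{B}}^{sw})$. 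The reverse inequality $t(C(X),\tau_{\mathcal{B}}^{sw}) \leq dn_{\mathcal{B}}^s(X)\cdot L^s(X,\mathcal{B})$ is exactly Proposition \ref{tightness is less than dn_B L_B}$(i)$. Combining the two establishes the equality.
\end{proof}
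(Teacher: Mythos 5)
Your proof is correct and is exactly how the paper obtains this theorem: the paper states that the result ``follows from Propositions \ref{dn_B and L_B is less than tightness} and \ref{tightness is less than dn_B L_B}'', which is precisely your combination of the two lower bounds (merged via the infinite-cardinal identity $\kappa\cdot\lambda=\max\{\kappa,\lambda\}$) with the upper bound from Proposition \ref{tightness is less than dn_B L_B}. No further comment is needed.
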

	
Next we study the network weight of the spaces $(C(X),\tau_{\mathcal{B}}^{sw})$ and $(C(X),\tau_{\mathcal{B}}^{w})$. 
		
\begin{definition}\normalfont{Let $(X,d)$ be a metric space and let $\mathcal{B}$ be a bornology on $X$. A collection $\varLambda$ of nonempty subsets of $X$ is called a $\mathcal{B}$-\textit{network} on $X$ if for every $B\in\mathcal{B}$ and every open $U\subseteq X$ with $\overline{B}\subseteq U$, there exists a $C\in\varLambda$ such that $\overline{B}\subseteq C \subseteq U$. Then the $\mathcal{B}$-\textit{network weight} of $X$ is defined by
\begin{align*}nw(X,\mathcal{B}) = \aleph_0 + \min\{|\varLambda|: \varLambda \text{ is a } \mathcal{B}\text{-network on } X\}.\end{align*}}	 \end{definition}

\begin{proposition}\label{dn_B and nw_B is less than nw of SWC}Let $(X,d)$ be a metric space and let $\mathcal{B}$ be a bornology on $X$ with a closed base. Then the following inequalities hold:
	\begin{enumerate}[(i)]
		\item $nw(X,\mathcal{B}) \leq nw(C(X),\tau_{\mathcal{B}}^{w}) \leq nw(C(X),\tau_{\mathcal{B}}^{sw})$.
		\item $dn_{\mathcal{B}}^s(X) \leq nw(C(X),\tau_{\mathcal{B}}^{sw})$; and $dn_{\mathcal{B}}(X) \leq nw(C(X),\tau_{\mathcal{B}}^{w})$.
	\end{enumerate} 
\end{proposition}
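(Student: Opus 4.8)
The plan is to handle the two parts separately. Part (ii) will follow almost immediately from a result already proved together with a standard cardinal-function inequality, while the genuine content lies in the first inequality of part (i). I begin with the easy middle inequality of (i): since $\tau_{\mathcal{B}}^{w}\subseteq\tau_{\mathcal{B}}^{sw}$, any network for the finer topology $\tau_{\mathcal{B}}^{sw}$ is automatically a network for the coarser topology $\tau_{\mathcal{B}}^{w}$ (the defining sandwich condition must be verified against fewer open sets), and hence $nw(C(X),\tau_{\mathcal{B}}^{w})\le nw(C(X),\tau_{\mathcal{B}}^{sw})$.

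For the substantive inequality $nw(X,\mathcal{B})\le nw(C(X),\tau_{\mathcal{B}}^{w})$, I would start from a network $\mathscr{N}$ for $(C(X),\tau_{\mathcal{B}}^{w})$ with $|\mathscr{N}|=nw(C(X),\tau_{\mathcal{B}}^{w})$ and attach to each nonempty $N\in\mathscr{N}$ the subset $C_N=\bigcap_{h\in N}h^{-1}(-1,1)$ of $X$. The claim is that $\varLambda=\{C_N:N\in\mathscr{N}\}\setminus\{\emptyset\}$ is a $\mathcal{B}$-network, which gives the bound since $|\varLambda|\le|\mathscr{N}|$. To verify it, fix $B\in\mathcal{B}$ and an open $U$ with $\overline{B}\subseteq U$. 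By normality of the metric space choose $g\in C(X)$ with $0\le g\le 1$, $g\equiv 0$ on $\overline{B}$ and $g\equiv 1$ on $X\setminus U$, and apply the network property to the basic $\tau_{\mathcal{B}}^{w}$-neighborhood $[B,1/2]^{w}(g)$: there is $N\in\mathscr{N}$ with $g\in N\subseteq[B,1/2]^{w}(g)$.

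I then show $\overline{B}\subseteq C_N\subseteq U$. For $C_N\subseteq U$: because $g\in N$ we have $C_N\subseteq g^{-1}(-1,1)$, and $g^{-1}(-1,1)\subseteq U$ since $g\equiv 1$ off $U$. For $\overline{B}\subseteq C_N$: every $h\in N$ satisfies $|h(x)-g(x)|<1/2$ on $B$, hence $|h|<1/2$ on $B$ (as $g\equiv 0$ there), and continuity upgrades this to $|h|\le 1/2<1$ on $\overline{B}$; thus $\overline{B}\subseteq h^{-1}(-1,1)$ for every $h\in N$, so $\overline{B}\subseteq C_N$, which in particular is nonempty. This establishes the first inequality of (i). Part (ii) reduces to the well-known fact that $t(Y)\le nw(Y)$ for every topological space $Y$ (given a network of size $\kappa$ and a point $x\in\overline{A}$, choosing one point of $A$ from each network member meeting $A$ yields a subset of $A$ of size $\le\kappa$ with $x$ in its closure). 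Combining this with Proposition \ref{dn_B and L_B is less than tightness} gives
\[
dn_{\mathcal{B}}^s(X)\le t(C(X),\tau_{\mathcal{B}}^{sw})\le nw(C(X),\tau_{\mathcal{B}}^{sw}),
\]
and, in the same way, $dn_{\mathcal{B}}(X)\le t(C(X),\tau_{\mathcal{B}}^{w})\le nw(C(X),\tau_{\mathcal{B}}^{w})$.

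The step I expect to require the most care is the two-sided control of $C_N$ in part (i): the Urysohn function $g$ must itself lie in the chosen network element $N$ (to force $C_N\subseteq U$ through $g^{-1}(-1,1)$), while \emph{simultaneously} every member of $N$ must stay strictly inside $(-1,1)$ on all of $\overline{B}$ (to force $\overline{B}\subseteq C_N$). Selecting the radius $1/2$ rather than $1$ in $[B,1/2]^{w}(g)$ is precisely what makes the passage from $B$ to $\overline{B}$ by continuity preserve the strict bound $|h|<1$, and getting this interplay right is the crux of the argument.
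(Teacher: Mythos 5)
Your argument is correct, but it departs from the paper's proof at both of the nontrivial steps. For the inequality $nw(X,\mathcal{B})\le nw(C(X),\tau_{\mathcal{B}}^{w})$ the paper merely observes that $\tau_{\mathcal{B}}\subseteq\tau_{\mathcal{B}}^{w}$ and quotes Theorem 3.6 of \cite{Cfbafs}, which already gives $nw(X,\mathcal{B})\le nw(C(X),\tau_{\mathcal{B}})$; your construction of the sets $C_N=\bigcap_{h\in N}h^{-1}(-1,1)$ in effect reproves that cited result from scratch, and the two-sided control of $C_N$ (the Urysohn function $g$ forcing $C_N\subseteq U$, the radius $1/2$ surviving the passage from $B$ to $\overline{B}$) is carried out correctly. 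This buys you a self-contained proof at the cost of length. For part (ii) the paper argues directly: from a network $\mathscr{N}$ on $(C(X),\tau_{\mathcal{B}}^{sw})$ it picks one function $\phi_N$ from each $N\in\mathscr{N}$ and verifies that $\{1+|\phi_N|:N\in\mathscr{N}\}$ is strongly dominating with respect to $\mathcal{B}$, using that $g\in N\subseteq[B,1]^{sw}(g)$ for a suitable $N$. Your route via $dn_{\mathcal{B}}^s(X)\le t(C(X),\tau_{\mathcal{B}}^{sw})\le nw(C(X),\tau_{\mathcal{B}}^{sw})$ is shorter and reuses Proposition \ref{dn_B and L_B is less than tightness} (which legitimately precedes this statement), at the price of invoking the general fact $t\le nw$, whose standard proof you sketch correctly. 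The easy middle inequality of (i) is handled identically in both proofs.
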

\begin{proof} $(i)$. Since the topology $\tau_{\mathcal{B}}^{w}$ is finer than $\tau_{\mathcal{B}}$, by Theorem 3.6 of \cite{Cfbafs}, $nw(X,\mathcal{B}) \leq nw(C(X),\tau_{\mathcal{B}}^{w})$. Second inequality is immediate.
	
$(ii)$. We only give the proof of first inequality as the proof of other is similar. Let $\mathscr{N}$ be a network on $(C(X),\tau_{\mathcal{B}}^{sw})$ with $|\mathscr{N}|=nw(C(X),\tau_{\mathcal{B}}^{sw})$. For each $N\in \mathscr{N}$, pick $\phi_N \in N$ and consider the set $E=\{1+|\phi_N| : N \in \mathscr{N}\}$. Clearly, $|E|=nw(C(X),\tau_{\mathcal{B}}^{sw})$. To show $E$ is strongly dominating with respect to $\mathcal{B}$, consider $g\in C(X)$ and $B\in\mathcal{B}$. So there is a member $N\in\mathscr{N}$ such that $g\in N \subseteq [B,1]^{sw}(g)$. Then $|\phi_N|+1\in E$ and $g(x)<|\phi_N|+1$ for all $x\in B^\delta$.	Therefore $E$ is a strongly dominating subset of $C(X)$ with respect to $\mathcal{B}$. Hence $dn_{\mathcal{B}}^s(X) \leq nw(C(X),\tau_{\mathcal{B}}^{sw})$. 
\end{proof}

\begin{proposition}\label{nw of SWC is less than dn_B nw_B density of SWC}Let $(X,d)$ be a metric space and let $\mathcal{B}$ be a bornology on $X$ with a closed base. Then the following inequalities hold:
	\begin{enumerate}[(i)]
		\item $nw(C(X),\tau_{\mathcal{B}}^{sw}) \leq dn_{\mathcal{B}}^s(X)\cdot d(C(X),\tau_{\mathcal{B}}^{sw})\cdot nw(X,\mathcal{B})$.
		\item $nw(C(X),\tau_{\mathcal{B}}^{w}) \leq dn_{\mathcal{B}}(X)\cdot d(C(X),\tau_{\mathcal{B}}^{w})\cdot nw(X,\mathcal{B})$.
	\end{enumerate} 
\end{proposition}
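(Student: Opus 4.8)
The plan is to exhibit, for part (i), an explicit network on $(C(X),\tau_{\mathcal{B}}^{sw})$ whose cardinality does not exceed the product on the right. I would fix witnesses of minimal cardinality: a strongly dominating family $F_{\mathcal{B}}^s\subseteq C^+(X)$ with $|F_{\mathcal{B}}^s|=dn_{\mathcal{B}}^s(X)$ (it may be taken inside $C^+(X)$ by replacing each $\phi$ with $1+|\phi|$), a dense set $D\subseteq C(X)$ with $|D|=d(C(X),\tau_{\mathcal{B}}^{sw})$, and a $\mathcal{B}$-network $\varLambda$ with $|\varLambda|=nw(X,\mathcal{B})$. For each triple $(g,\phi,C)\in D\times F_{\mathcal{B}}^s\times\varLambda$ I would set
$$N(g,\phi,C)=\Big\{h\in C(X): \exists\,\eta>0 \text{ with } |h(x)-g(x)|<\tfrac{1}{2\phi(x)} \text{ for all } x\in C^{\eta}\Big\},$$
and take $\mathscr{N}=\{N(g,\phi,C):(g,\phi,C)\in D\times F_{\mathcal{B}}^s\times\varLambda\}$. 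Each such set is nonempty (it contains $g$), and $|\mathscr{N}|\leq dn_{\mathcal{B}}^s(X)\cdot d(C(X),\tau_{\mathcal{B}}^{sw})\cdot nw(X,\mathcal{B})$.

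It then remains to verify that $\mathscr{N}$ is a network. Given $f\in C(X)$ and a basic neighborhood $[B,\epsilon]^{sw}(f)$ — where, using the closed base, I assume $B$ is closed — I would build the right triple in three steps. First, strong domination applied to $1/\epsilon$ yields $\phi\in F_{\mathcal{B}}^s$ and $\delta_0>0$ with $1/\phi\leq\epsilon$ on $B^{\delta_0}$. Second, density applied to the basic neighborhood $[B,\tfrac{1}{4\phi}]^{sw}(f)$ yields $g\in D$ and $\delta_1>0$ with $|g-f|<\tfrac{1}{4\phi}$ on $B^{\delta_1}$; set $\delta=\min\{\delta_0,\delta_1\}$. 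Third, I would apply the $\mathcal{B}$-network to $B$ and the open set $U=\{x:|f(x)-g(x)|<\tfrac{1}{4\phi(x)}\}\cap B^{\delta/2}$, which contains $\overline{B}$, to obtain $C\in\varLambda$ with $\overline{B}\subseteq C\subseteq U$. The crucial consequence of this third step is the inclusion $C\subseteq B^{\delta/2}$, which together with the elementary metric estimate $(B^{\delta/2})^{\delta/2}\subseteq B^{\delta}$ gives $C^{\delta/2}\subseteq B^{\delta}$.

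With this triple I would check the two containments $f\in N(g,\phi,C)\subseteq[B,\epsilon]^{sw}(f)$. Membership $f\in N(g,\phi,C)$ follows by taking $\eta=\delta/2$, since $C^{\delta/2}\subseteq B^{\delta}$ forces $|f-g|<\tfrac{1}{4\phi}<\tfrac{1}{2\phi}$ there. For the inclusion, any $h\in N(g,\phi,C)$ comes with some $\eta>0$; replacing it by $\eta'=\min\{\eta,\delta/2\}$ and using $\overline{B}\subseteq C$ (whence $B^{\eta'}=\overline{B}^{\eta'}\subseteq C^{\eta'}$) one obtains, on $C^{\eta'}\subseteq B^{\delta}$, the triangle-inequality bound $|h-f|\leq|h-g|+|g-f|<\tfrac{1}{2\phi}+\tfrac{1}{4\phi}<\tfrac{1}{\phi}\leq\epsilon$; hence $|h-f|<\epsilon$ on $B^{\eta'}$ and $h\in[B,\epsilon]^{sw}(f)$. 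Part (ii) is proved by the same scheme with the ordinary domination number, ordinary density, and the simpler network element $\{h\in C(X):|h-g|<\tfrac{1}{2\phi}\text{ on }C\}$, since no enlargement is needed in the Whitney topology.

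I expect the main obstacle to be precisely the handling of the $\delta$-enlargement in part (i): a network element that only controls $h$ on the set $C$ itself would place $h$ close to $f$ merely on $\overline{B}$, which is not enough to conclude $h\in[B,\epsilon]^{sw}(f)$, because that membership demands closeness on a full enlargement $B^{\delta'}$. Encoding an enlargement $C^{\eta}$ into the definition of $N(g,\phi,C)$, and arranging through the intersection with $B^{\delta/2}$ that $C$ is squeezed between $\overline{B}$ and $B^{\delta/2}$ so that enlargements of $C$ and of $B$ can be compared in \emph{both} directions, is the key device that makes the verification succeed.
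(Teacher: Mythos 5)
Your proposal is correct and follows essentially the same route as the paper: the network consists of the sets $[C,\tfrac{1}{2\phi}]^{sw}(g)$ indexed by $D\times F_{\mathcal{B}}^s\times\varLambda$, and the verification runs through the same three witnesses (strong domination for $\phi$, density for $g$, the $\mathcal{B}$-network for $C$ squeezed between $\overline{B}$ and a half-enlargement of $B$) followed by the triangle inequality on a common enlargement. Your handling of the enlargements is slightly more explicit than the paper's (which uses $B^{2\delta_2}$ versus $C\subseteq B^{\delta_2}$ to the same effect), but there is no substantive difference.
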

\begin{proof} $(i)$. Let $F_{\mathcal{B}}^s$ be a strongly dominating subset of $C(X)$ with respect to $\mathcal{B}$ with $|F_{\mathcal{B}}^s|= dn_{\mathcal{B}}^s(X)$, $\varLambda$ be a $\mathcal{B}$-network on $X$ with $|\varLambda|=nw(X,\mathcal{B})$, and let $D$ be a dense subset in $(C(X),\tau_{\mathcal{B}}^{sw})$ with $|D|=d(C(X),\tau_{\mathcal{B}}^{sw})$. Define $\mathscr{A}=\{[C,\frac{1}{2\phi}]^{sw}(g) : C\in\varLambda, \ \phi\in F_{\mathcal{B}}^s \text{ and } g\in D\}$. So $|\mathscr{A}|\leq dn_{\mathcal{B}}^s(X)\cdot d(C(X),\tau_{\mathcal{B}}^{sw})\cdot nw(X,\mathcal{B})$. We show that $\mathscr{A}$ is a network on $(C(X),\tau_{\mathcal{B}}^{sw})$. Let $[B,\epsilon]^{sw}(f)$ be a basic open neighborhood of $f$ in $(C(X),\tau_{\mathcal{B}}^{sw})$. So we can find $\phi\in F_{\mathcal{B}}^s$ and $g\in D$ such that $\frac{1}{\phi(x)}\leq\epsilon(x)$ for all $x\in B^{\delta_1}$ for some $\delta_1>0$ and $g\in [B,\frac{1}{2\phi}]^{sw}(f)$. Consequently, $|g(x)-f(x)|<\frac{1}{2\phi(x)}$ for all $x\in B^{2\delta_2}$ for some $\delta_2>0$. Find $C\in\varLambda$ such that $\overline{B}\subseteq C \subseteq B^{\delta_2}$. Clearly, $[C,\frac{1}{2\phi}]^{sw}(g) \in \mathscr{A}$. If $h\in [C,\frac{1}{2\phi}]^{sw}(g)$, then $|h(x)-g(x)|<\frac{1}{2\phi(x)}$ for all $x\in C^{\delta_3}$ for some $\delta_3>0$. Thus we have $|h(x)-f(x)|\leq|h(x)-g(x)|+|g(x)-f(x)| < \frac{1}{2\phi(x)} + \frac{1}{2\phi(x)} \leq \epsilon(x)$ for all $x\in B^\delta$ for $\delta=\min\{\delta_1,\delta_2,\delta_3\}$. Hence $f\in [C,\frac{1}{2\phi}]^{sw}(g) \subseteq [B,\epsilon]^{sw}(f)$. 
	
$(ii)$. It can be proved in a manner similar to part $(i)$.   
\end{proof}
	
%

The following theorem follows from Propositions \ref{dn_B and nw_B is less than nw of SWC} and \ref{nw of SWC is less than dn_B nw_B density of SWC}.

\begin{theorem}\label{network weight of topology of SWC} Let $\mathcal{B}$ be a bornology on a metric space $(X,d)$ with a closed base. Then
	\begin{enumerate}[(i)]
		\item $nw(C(X),\tau_{\mathcal{B}}^{sw}) = dn_{\mathcal{B}}^s(X)\cdot d(C(X),\tau_{\mathcal{B}}^{sw})\cdot nw(X,\mathcal{B})$.
		\item $nw(C(X),\tau_{\mathcal{B}}^{w}) = dn_{\mathcal{B}}(X)\cdot d(C(X),\tau_{\mathcal{B}}^{w})\cdot nw(X,\mathcal{B})$.
	\end{enumerate}
\end{theorem}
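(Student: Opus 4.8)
The plan is to obtain both equalities by sandwiching $nw(C(X),\tau_{\mathcal{B}}^{sw})$ (respectively $nw(C(X),\tau_{\mathcal{B}}^{w})$) between the stated product and itself, using the two preceding propositions for the two directions. As the statement already attributes the result to Propositions \ref{dn_B and nw_B is less than nw of SWC} and \ref{nw of SWC is less than dn_B nw_B density of SWC}, the remaining task is simply to assemble the inequalities and dispatch the cardinal arithmetic.

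For the inequality $dn_{\mathcal{B}}^s(X)\cdot d(C(X),\tau_{\mathcal{B}}^{sw})\cdot nw(X,\mathcal{B}) \leq nw(C(X),\tau_{\mathcal{B}}^{sw})$ in part (i), I would first record that each of the three factors is bounded above by $nw(C(X),\tau_{\mathcal{B}}^{sw})$. Two of these bounds come directly from Proposition \ref{dn_B and nw_B is less than nw of SWC}: part (i) yields $nw(X,\mathcal{B}) \leq nw(C(X),\tau_{\mathcal{B}}^{sw})$, and part (ii) yields $dn_{\mathcal{B}}^s(X) \leq nw(C(X),\tau_{\mathcal{B}}^{sw})$. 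The third bound, $d(C(X),\tau_{\mathcal{B}}^{sw}) \leq nw(C(X),\tau_{\mathcal{B}}^{sw})$, is the standard general fact that the density of any topological space never exceeds its network weight, seen by selecting one point from each member of a network of minimal cardinality and checking that the resulting set is dense. Since $nw(C(X),\tau_{\mathcal{B}}^{sw})$ is an infinite cardinal, the product of three cardinals each capped by it is again capped by it, which gives the desired inequality.

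The reverse inequality $nw(C(X),\tau_{\mathcal{B}}^{sw}) \leq dn_{\mathcal{B}}^s(X)\cdot d(C(X),\tau_{\mathcal{B}}^{sw})\cdot nw(X,\mathcal{B})$ is precisely Proposition \ref{nw of SWC is less than dn_B nw_B density of SWC}(i), so no further argument is needed. Combining the two directions establishes part (i). For part (ii), I would run the identical argument with $\tau_{\mathcal{B}}^{w}$ and $dn_{\mathcal{B}}(X)$ in place of $\tau_{\mathcal{B}}^{sw}$ and $dn_{\mathcal{B}}^s(X)$, invoking the ``$w$''-versions of the inequalities in Propositions \ref{dn_B and nw_B is less than nw of SWC} and \ref{nw of SWC is less than dn_B nw_B density of SWC}.

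I do not expect a genuine obstacle here, since everything substantive has already been carried out in the two cited propositions. The only points requiring care are recalling the general density--network-weight inequality, which is not isolated as a separate lemma in the text, and the routine collapse of a product of infinitely-many capped cardinal factors to the single cap.
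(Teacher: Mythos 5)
Your proposal is correct and matches the paper, which derives the theorem exactly as you do: the upper bound is Proposition \ref{nw of SWC is less than dn_B nw_B density of SWC}, and the lower bound combines Proposition \ref{dn_B and nw_B is less than nw of SWC} with the standard inequality $d\leq nw$ (recorded later in Remark \ref{cardinals inequalities}) and the usual cardinal arithmetic. No gaps.
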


\subsection{Index of narrowness and Weight} \hfill \vspace{0.2cm}

In this subsection, we investigate the relationship of index of narrowness of the spaces $(C(X),\tau_{\mathcal{B}}^{sw})$ and $(C(X),\tau_{\mathcal{B}}^{w})$ with some other cardinal invariants.

\begin{proposition}\label{ib of SWT}Let $(X,d)$ be a metric space and let $\mathcal{B}$ be a bornology on $X$ with a closed base. Then the following inequalities hold:
	\begin{enumerate}[(i)]
		\item $dn_{\mathcal{B}}^s(X) \leq ib(C(X),\tau_{\mathcal{B}}^{sw})\cdot w_X(\mathcal{B})$.
		\item $dn_{\mathcal{B}}(X) \leq ib(C(X),\tau_{\mathcal{B}}^{w})\cdot w_X(\mathcal{B})$.
	\end{enumerate} 	 
\end{proposition}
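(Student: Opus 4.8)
The plan is to prove part $(i)$; part $(ii)$ will follow by the same argument with $\tau_{\mathcal{B}}^{w}$ and $dn_{\mathcal{B}}(X)$ replacing their strong counterparts and with the $\delta$-enlargements suppressed. Let $\lambda = ib(C(X),\tau_{\mathcal{B}}^{sw})$ and let $\mathcal{B}_0$ be a base for $\mathcal{B}$ with $|\mathcal{B}_0| = w_X(\mathcal{B})$; I want to produce a strongly dominating subset of $C(X)$ with respect to $\mathcal{B}$ of cardinality at most $\lambda \cdot w_X(\mathcal{B})$. The natural device is to feed a single well-chosen neighborhood of $f_0$ into the definition of narrowness. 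Since $(C(X),\tau_{\mathcal{B}}^{sw})$ is a topological group under addition with identity $f_0$, for any entourage-type neighborhood $U$ of $f_0$ there is a set $S \subseteq C(X)$ with $|S| \le \lambda$ satisfying $C(X) = U + S$.

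First I would fix a convenient basic neighborhood of $f_0$. A good choice is $U = [X', 1]^{sw}(f_0)$ for a member $X'$ of the base, or more usefully the neighborhood $[B,1]^{sw}(f_0)$ as $B$ ranges over $\mathcal{B}_0$; applying $\lambda$-narrowness to each such $U_B = [B,1]^{sw}(f_0)$ yields for each $B \in \mathcal{B}_0$ a set $S_B \subseteq C(X)$ with $|S_B| \le \lambda$ and $C(X) = U_B + S_B$. I would then set $F_{\mathcal{B}}^s = \{\,1 + |s| : s \in S_B,\ B \in \mathcal{B}_0\,\}$ (taking $1+|s|$ to force membership in $C^+(X)$ and to absorb the unit slack in $U_B$). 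This family has cardinality at most $\lambda \cdot w_X(\mathcal{B})$, which is exactly the right-hand side.

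The core step is verifying that $F_{\mathcal{B}}^s$ strongly dominates with respect to $\mathcal{B}$. Given $g \in C(X)$ and $B \in \mathcal{B}$, choose $B' \in \mathcal{B}_0$ with $B \subseteq B'$. Writing $g = u + s$ with $u \in U_{B'} = [B',1]^{sw}(f_0)$ and $s \in S_{B'}$, the condition $u \in [B',1]^{sw}(f_0)$ gives $\delta > 0$ with $|u(x)| = |g(x) - s(x)| < 1$ for all $x \in (B')^\delta$. Hence on $(B')^\delta \supseteq B^\delta$ we get $g(x) < s(x) + 1 \le |s(x)| + 1$, so $g(x) \le 1 + |s|(x)$ there, and $1 + |s| \in F_{\mathcal{B}}^s$ is the required dominating function with witness $\delta$. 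This shows $dn_{\mathcal{B}}^s(X) \le |F_{\mathcal{B}}^s| \le \lambda \cdot w_X(\mathcal{B})$, which is $(i)$.

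The main obstacle I anticipate is bookkeeping rather than conceptual: I must ensure the single neighborhood chosen for each $B$ in the base simultaneously controls \emph{all} $g$ that will be tested against that base member, which is why indexing the narrowness sets over $\mathcal{B}_0$ rather than over a single global neighborhood is essential — a lone neighborhood of $f_0$ cannot see arbitrarily large members of $\mathcal{B}$. I would also double-check that the additive slack of $1$ in $[B,1]^{sw}$ and the passage from $s$ to $1+|s|$ cooperate correctly so that domination (the one-sided inequality $g \le \phi$) genuinely follows from the two-sided bound $|g - s| < 1$; this is the point where the definition of strong domination, rather than mere closeness, must be matched carefully to the group structure.
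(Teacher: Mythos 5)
Your proposal is correct and follows essentially the same route as the paper: apply $\lambda$-narrowness to the neighborhoods $[B,1]^{sw}(f_0)$ indexed over a minimal base $\mathcal{B}_0$, collect the translates $1+f_B$ (the paper) or $1+|s|$ (your harmless variant forcing positivity), and verify strong domination from the bound $|g-s|<1$ on $(B')^\delta$. The verification and the cardinality count match the paper's proof step for step.
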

\begin{proof} 
	
	
	$(i)$. Let $\mathcal{B}_0$ be a base of $\mathcal{B}$ with $|\mathcal{B}_0|=w_X(\mathcal{B})$. For each $B\in\mathcal{B}_0$, there is a subset $S_B$ of $C(X)$ such that $C(X)=[B,1]^{sw}(f_0) + S_B$ with $|S_B|\leq ib(C(X),\tau_{\mathcal{B}}^{sw})$. Consider $E=\cup_{B\in\mathcal{B}_0} E_B$, where $E_B=\{1+f_B : f_B \in S_B\}$ for each $B\in \mathcal{B}_0$. Clearly, $|E|\leq ib(C(X),\tau_{\mathcal{B}}^{sw})\cdot w_X(\mathcal{B})$. Let $g\in C(X)$ and $B_1\in\mathcal{B}$. So there exist a $B\in\mathcal{B}_0$ with $B_1\subseteq B$ and $f_B\in S_B$ such that $g=h+f_B$ for some $h\in [B,1]^{sw}(f_0)$. Thus we have $1+f_B \in E$ such that $g\leq 1+f_B$ for all $x\in B_1^\delta\subseteq B^\delta$ for some $\delta >0$. Therefore $E$ is a strongly dominating subset of $C(X)$ with respect to $\mathcal{B}$. Hence $dn_{\mathcal{B}}^s(X) \leq ib(C(X),\tau_{\mathcal{B}}^{sw})\cdot w_X(\mathcal{B})$.	
	
	$(ii)$. It can be proved in a manner similar to part $(i)$.  
\end{proof}

\begin{remark}\label{cardinals inequalities}For a topological group $X$, we have 
	\begin{enumerate}
		\item[(i)] $ib(X)\leq c(X)\leq d(X)\leq nw(X)\leq w(X)$ and $ib(X)\leq L(X)\leq nw(X)$ (see Proposition 5.2.1 of \cite{tkachenko} and 3.12.7 of \cite{engelking}). 
		\item[(ii)] $w(X)=\chi(X)\cdot ib(X)=\chi(X)\cdot d(X)=\chi(X)\cdot L(X)$ (see Theorems 5.2.3 and 5.2.5 in \cite{tkachenko}). 
	\end{enumerate}
\end{remark}

The following corollary to Proposition \ref{ib of SWT} follows immediately from the Remark \ref{cardinals inequalities}.
\begin{corollary} Let $(X,d)$ be a metric space and let $\mathcal{B}$ be a bornology on $X$ with a closed base. Then the following inequalities hold:
	\begin{enumerate}[(i)]
		\item $dn_{\mathcal{B}}^s(X) \leq c(C(X),\tau_{\mathcal{B}}^{sw})\cdot w_X(\mathcal{B})$; and $dn_{\mathcal{B}}(X) \leq c(C(X),\tau_{\mathcal{B}}^{w})\cdot w_X(\mathcal{B})$.
		\item $dn_{\mathcal{B}}^s(X) \leq L(C(X),\tau_{\mathcal{B}}^{sw})\cdot w_X(\mathcal{B})$; and $dn_{\mathcal{B}}(X) \leq L(C(X),\tau_{\mathcal{B}}^{w})\cdot w_X(\mathcal{B})$.
	\end{enumerate} 	 
\end{corollary}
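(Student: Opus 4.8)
The plan is to obtain each of the four inequalities by simply chaining Proposition \ref{ib of SWT} with the order relations recorded in Remark \ref{cardinals inequalities}, exploiting the monotonicity of the cardinal product. Recall that both $(C(X),\tau_{\mathcal{B}}^{sw})$ and $(C(X),\tau_{\mathcal{B}}^{w})$ are topological groups under pointwise addition (as already noted before Theorem \ref{character of topology of SWC}), so Remark \ref{cardinals inequalities}(i) applies to each of them. From that remark we have in particular $ib(Y)\leq c(Y)$ and $ib(Y)\leq L(Y)$ for $Y\in\{(C(X),\tau_{\mathcal{B}}^{sw}),(C(X),\tau_{\mathcal{B}}^{w})\}$. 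Since all the cardinal invariants involved are infinite by definition (each carries an additive $\aleph_0$), the product $\kappa\cdot\lambda$ is just $\max\{\kappa,\lambda\}$, which is monotone in each argument; hence multiplying the above inequalities on the right by the fixed cardinal $w_X(\mathcal{B})$ preserves them.

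For part (i), I would start from Proposition \ref{ib of SWT}(i), namely $dn_{\mathcal{B}}^s(X)\leq ib(C(X),\tau_{\mathcal{B}}^{sw})\cdot w_X(\mathcal{B})$, and then use $ib(C(X),\tau_{\mathcal{B}}^{sw})\leq c(C(X),\tau_{\mathcal{B}}^{sw})$ together with monotonicity of the product to conclude
\[
dn_{\mathcal{B}}^s(X)\leq ib(C(X),\tau_{\mathcal{B}}^{sw})\cdot w_X(\mathcal{B})\leq c(C(X),\tau_{\mathcal{B}}^{sw})\cdot w_X(\mathcal{B}).
\]
The companion inequality $dn_{\mathcal{B}}(X)\leq c(C(X),\tau_{\mathcal{B}}^{w})\cdot w_X(\mathcal{B})$ follows identically, starting instead from Proposition \ref{ib of SWT}(ii) and applying $ib(C(X),\tau_{\mathcal{B}}^{w})\leq c(C(X),\tau_{\mathcal{B}}^{w})$.

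For part (ii), the argument is verbatim the same, but I would invoke the other branch of Remark \ref{cardinals inequalities}(i), namely $ib(Y)\leq L(Y)$, in place of $ib(Y)\leq c(Y)$. Thus Proposition \ref{ib of SWT}(i) and $ib(C(X),\tau_{\mathcal{B}}^{sw})\leq L(C(X),\tau_{\mathcal{B}}^{sw})$ give $dn_{\mathcal{B}}^s(X)\leq L(C(X),\tau_{\mathcal{B}}^{sw})\cdot w_X(\mathcal{B})$, and Proposition \ref{ib of SWT}(ii) together with $ib(C(X),\tau_{\mathcal{B}}^{w})\leq L(C(X),\tau_{\mathcal{B}}^{w})$ yields $dn_{\mathcal{B}}(X)\leq L(C(X),\tau_{\mathcal{B}}^{w})\cdot w_X(\mathcal{B})$.

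There is no genuine obstacle here: the content has already been discharged in Proposition \ref{ib of SWT}, and the corollary is a formal consequence. The only points worth being careful about are that the two function spaces really are topological groups (so that the chain $ib\leq c$ and $ib\leq L$ of Remark \ref{cardinals inequalities} is legitimately available), and that the cardinal product $\cdot$ is monotone in each factor — both of which are immediate. Accordingly I expect the entire proof to reduce to citing Proposition \ref{ib of SWT} and Remark \ref{cardinals inequalities}, exactly as the statement asserts.
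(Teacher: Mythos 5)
Your argument is correct and is exactly the paper's proof: the corollary is stated there as an immediate consequence of Proposition \ref{ib of SWT} combined with the inequalities $ib\leq c$ and $ib\leq L$ from Remark \ref{cardinals inequalities}, using monotonicity of the cardinal product. Nothing is missing.
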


\begin{corollary}\label{DHHM result2} \textnormal{(\cite{DHHM}, Lemma 2.8)} For a metric space $(X,d)$, $dn(X) \leq c(C(X),\tau^{w})$; and $dn(X) \leq L(C(X),\tau^{w})$.
\end{corollary}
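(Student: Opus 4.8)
\textbf{Proof proposal for Corollary \ref{DHHM result2}.}

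The plan is to obtain this as a direct specialization of Proposition \ref{ib of SWT} to the case $\mathcal{B} = \mathcal{P}_0(X)$, combined with the inequalities recorded in Remark \ref{cardinals inequalities}. First I would note that for the Whitney topology $\tau^w$ on $C(X)$ we have $\tau^w = \tau_{\mathcal{B}}^w$ with $\mathcal{B} = \mathcal{P}_0(X)$, as observed in the Preliminaries, and that $\mathcal{P}_0(X)$ has a closed base (indeed the closures of its members), so that Proposition \ref{ib of SWT}(ii) applies. With this choice, $dn_{\mathcal{B}}(X)$ reduces to $dn(X)$ by definition, and the crucial simplification is that $w_X(\mathcal{P}_0(X)) = \aleph_0$, recorded immediately after the definition of $w_X(\mathcal{B})$ (since $X$ itself is the largest element and $\{X\}$ is a base).

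Substituting these identifications into Proposition \ref{ib of SWT}(ii) gives $dn(X) \leq ib(C(X),\tau^w) \cdot \aleph_0 = ib(C(X),\tau^w)$, the last equality because $ib(C(X),\tau^w)$ is by definition an infinite cardinal. It then remains only to pass from the index of narrowness to cellularity and Lindel\"of degree. By Remark \ref{cardinals inequalities}(i), for any topological group $X$ one has $ib(X) \leq c(X)$ and $ib(X) \leq L(X)$. Since $(C(X),\tau^w)$ is a topological group, applying these two bounds yields $dn(X) \leq ib(C(X),\tau^w) \leq c(C(X),\tau^w)$ and $dn(X) \leq ib(C(X),\tau^w) \leq L(C(X),\tau^w)$, which are exactly the two asserted inequalities.

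There is essentially no obstacle here: the entire content is already carried by Proposition \ref{ib of SWT} and the standard topological-group inequalities of Remark \ref{cardinals inequalities}, so the corollary is a clean two-line consequence. The only point requiring a moment's care is verifying that $\mathcal{P}_0(X)$ satisfies the closed-base hypothesis of Proposition \ref{ib of SWT} and that the Whitney topology really is the bornological Whitney topology for this choice of $\mathcal{B}$; both are immediate from the setup in the Preliminaries. Alternatively, one could bypass Proposition \ref{ib of SWT} entirely and give the same corollary as a consequence of the corresponding weight identity once the index of narrowness is computed, but routing through Proposition \ref{ib of SWT} is the most economical path.
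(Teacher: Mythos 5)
Your argument is correct and is essentially the paper's own route: the paper derives this corollary by specializing the preceding (unnamed) corollary of Proposition \ref{ib of SWT} — itself obtained exactly as you describe, via $ib \leq c$ and $ib \leq L$ from Remark \ref{cardinals inequalities} — to $\mathcal{B} = \mathcal{P}_0(X)$, using $w_X(\mathcal{P}_0(X)) = \aleph_0$ and $\tau^w = \tau_{\mathcal{P}_0(X)}^w$. Nothing further is needed.
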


\begin{theorem}\label{weight of tau_B^sw} Let $(X,d)$ be a metric space and let $\mathcal{B}$ be a bornology on $X$ with a closed base. Then 
	\begin{align*}
	&w(C(X),\tau_{\mathcal{B}}^{sw})=w_X(\mathcal{B})\cdot ib(C(X),\tau_{\mathcal{B}}^{sw})=w_X(\mathcal{B})\cdot c(C(X),\tau_{\mathcal{B}}^{sw})\\ &=w_X(\mathcal{B})\cdot L(C(X),\tau_{\mathcal{B}}^{sw})=w_X(\mathcal{B})\cdot d(C(X),\tau_{\mathcal{B}}^{sw})=w_X(\mathcal{B})\cdot nw(C(X),\tau_{\mathcal{B}}^{sw}).
	\end{align*}
\end{theorem}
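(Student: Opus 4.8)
The plan is to exploit that $G := (C(X),\tau_{\mathcal{B}}^{sw})$ is a topological group and to combine the general cardinal-function identities of Remark \ref{cardinals inequalities} with the character formula of Theorem \ref{character of topology of SWC} and the inequality of Proposition \ref{ib of SWT}. Writing $\mathfrak{b} = w_X(\mathcal{B})$ for brevity, Remark \ref{cardinals inequalities}(ii) gives $w(G) = \chi(G)\cdot ib(G) = \chi(G)\cdot d(G) = \chi(G)\cdot L(G)$, while Theorem \ref{character of topology of SWC} gives $\chi(G) = dn_{\mathcal{B}}^s(X)\cdot \mathfrak{b}$. Thus the crux is to show that multiplying any of $ib(G),d(G),L(G)$ by $\mathfrak{b}$ already absorbs the extra factor $dn_{\mathcal{B}}^s(X)$ carried inside $\chi(G)$. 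I would record the infinite-cardinal identities $\lambda\cdot\lambda=\lambda$ and $\lambda\cdot\mu=\max\{\lambda,\mu\}$ once at the outset and use them silently thereafter.

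First I would establish $w(G) = \mathfrak{b}\cdot ib(G)$. The inequality $\mathfrak{b}\cdot ib(G) \leq w(G)$ is immediate from $w(G) = \chi(G)\cdot ib(G)$ together with $\chi(G) = dn_{\mathcal{B}}^s(X)\cdot\mathfrak{b} \geq \mathfrak{b}$. For the reverse, I substitute the character formula to write $w(G) = dn_{\mathcal{B}}^s(X)\cdot \mathfrak{b}\cdot ib(G)$ and then feed in Proposition \ref{ib of SWT}, namely $dn_{\mathcal{B}}^s(X) \leq ib(G)\cdot\mathfrak{b}$; cardinal arithmetic collapses the right side to $\mathfrak{b}\cdot ib(G)$. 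The same two-line argument works verbatim with $d(G)$ or $L(G)$ in place of $ib(G)$: since $ib(G) \leq d(G)$ and $ib(G) \leq L(G)$ by Remark \ref{cardinals inequalities}(i), we still have $dn_{\mathcal{B}}^s(X) \leq ib(G)\cdot\mathfrak{b} \leq d(G)\cdot\mathfrak{b}$ and $\leq L(G)\cdot\mathfrak{b}$, and combined with $w(G) = \chi(G)\cdot d(G) = \chi(G)\cdot L(G)$ this yields $w(G) = \mathfrak{b}\cdot d(G) = \mathfrak{b}\cdot L(G)$.

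The remaining factors $c(G)$ and $nw(G)$ are not covered by the identities in Remark \ref{cardinals inequalities}(ii), so I would obtain them by sandwiching via the monotonicity chains of Remark \ref{cardinals inequalities}(i). From $ib(G) \leq c(G) \leq d(G)$ I get $\mathfrak{b}\cdot ib(G) \leq \mathfrak{b}\cdot c(G) \leq \mathfrak{b}\cdot d(G)$, and since both outer terms already equal $w(G)$ by the previous step, $\mathfrak{b}\cdot c(G) = w(G)$ follows. Likewise $d(G) \leq nw(G) \leq w(G)$ gives $\mathfrak{b}\cdot d(G) \leq \mathfrak{b}\cdot nw(G) \leq \mathfrak{b}\cdot w(G) = w(G)$, where the last equality uses $\mathfrak{b} = w_X(\mathcal{B}) \leq \chi(G) \leq w(G)$; again the outer terms equal $w(G)$, forcing $\mathfrak{b}\cdot nw(G) = w(G)$.

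I do not anticipate a genuinely hard obstacle: once the right inputs are assembled the result is essentially bookkeeping in infinite cardinal arithmetic. The one point deserving care is the reverse inequality $w(G) \leq \mathfrak{b}\cdot ib(G)$, where the stray factor $dn_{\mathcal{B}}^s(X)$ must be dominated — this is precisely where Proposition \ref{ib of SWT} is indispensable — and it is worth verifying that this absorption survives the passage to $d$, $L$, $c$, and $nw$ through the inequalities of Remark \ref{cardinals inequalities}(i) rather than re-proving an analogue of Proposition \ref{ib of SWT} for each invariant.
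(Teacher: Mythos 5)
Your proposal is correct and uses exactly the three ingredients the paper cites for this theorem --- Remark \ref{cardinals inequalities}, Theorem \ref{character of topology of SWC}, and Proposition \ref{ib of SWT}(i) --- so it is the same approach, just with the cardinal arithmetic written out in full. The absorption step via $dn_{\mathcal{B}}^s(X) \leq ib(C(X),\tau_{\mathcal{B}}^{sw})\cdot w_X(\mathcal{B})$ and the sandwiching of $c$ and $nw$ are both sound.
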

\begin{proof} It follows from Remark \ref{cardinals inequalities}, Theorem \ref{character of topology of SWC} and Proposition \ref{ib of SWT}$(i)$.
\end{proof}

\begin{theorem}\label{weight of tau_B^w} Let $(X,d)$ be a metric space and let $\mathcal{B}$ be a bornology on $X$ with a closed base. Then 
	\begin{align*}
	&w(C(X),\tau_{\mathcal{B}}^{w})=w_X(\mathcal{B})\cdot ib(C(X),\tau_{\mathcal{B}}^{w})=w_X(\mathcal{B})\cdot c(C(X),\tau_{\mathcal{B}}^{w})\\ &=w_X(\mathcal{B})\cdot L(C(X),\tau_{\mathcal{B}}^{w})=w_X(\mathcal{B})\cdot d(C(X),\tau_{\mathcal{B}}^{w})=w_X(\mathcal{B})\cdot nw(C(X),\tau_{\mathcal{B}}^{w}).
	\end{align*}
\end{theorem}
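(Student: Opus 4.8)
The plan is to follow the same route as the proof of Theorem \ref{weight of tau_B^sw}, but using the Whitney-convergence analogues of the three ingredients: namely Remark \ref{cardinals inequalities}, the formula $\chi(C(X),\tau_{\mathcal{B}}^{w}) = dn_{\mathcal{B}}(X)\cdot w_{X}(\mathcal{B})$ from Theorem \ref{character of topology of wc}, and the inequality $dn_{\mathcal{B}}(X) \leq ib(C(X),\tau_{\mathcal{B}}^{w})\cdot w_X(\mathcal{B})$ from Proposition \ref{ib of SWT}$(ii)$. Since $(C(X),\tau_{\mathcal{B}}^{w})$ is a topological group, part $(ii)$ of Remark \ref{cardinals inequalities} applies and gives $w(C(X),\tau_{\mathcal{B}}^{w}) = \chi(C(X),\tau_{\mathcal{B}}^{w})\cdot ib(C(X),\tau_{\mathcal{B}}^{w})$, and likewise with $d$ and $L$ in place of $ib$. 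The crux is therefore to prove the single identity $w(C(X),\tau_{\mathcal{B}}^{w}) = w_X(\mathcal{B})\cdot ib(C(X),\tau_{\mathcal{B}}^{w})$; every remaining equality will then drop out by a sandwiching argument.

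To establish that identity, I would first substitute the character formula to obtain $w(C(X),\tau_{\mathcal{B}}^{w}) = dn_{\mathcal{B}}(X)\cdot w_{X}(\mathcal{B})\cdot ib(C(X),\tau_{\mathcal{B}}^{w})$. Then I would apply Proposition \ref{ib of SWT}$(ii)$ to replace $dn_{\mathcal{B}}(X)$ by its upper bound $ib(C(X),\tau_{\mathcal{B}}^{w})\cdot w_X(\mathcal{B})$; since all the cardinals in play are infinite (each is of the form $\aleph_0 + \cdots$), the resulting product collapses via $\kappa\cdot\kappa = \kappa$ to give $w(C(X),\tau_{\mathcal{B}}^{w}) \leq w_X(\mathcal{B})\cdot ib(C(X),\tau_{\mathcal{B}}^{w})$. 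For the reverse inequality I would note that $ib(C(X),\tau_{\mathcal{B}}^{w}) \leq w(C(X),\tau_{\mathcal{B}}^{w})$ by Remark \ref{cardinals inequalities}$(i)$, while $w_X(\mathcal{B}) \leq \chi(C(X),\tau_{\mathcal{B}}^{w}) \leq w(C(X),\tau_{\mathcal{B}}^{w})$ follows from the character formula; multiplying these two bounds yields $w_X(\mathcal{B})\cdot ib(C(X),\tau_{\mathcal{B}}^{w}) \leq w(C(X),\tau_{\mathcal{B}}^{w})$, and the identity follows.

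Finally, to recover the remaining four expressions I would invoke the chains $ib \leq c \leq d \leq nw \leq w$ and $ib \leq L \leq nw$ from Remark \ref{cardinals inequalities}$(i)$ (all evaluated at $(C(X),\tau_{\mathcal{B}}^{w})$). Multiplying each such inequality through by $w_X(\mathcal{B})$ traps every one of $w_X(\mathcal{B})\cdot c$, $w_X(\mathcal{B})\cdot L$, $w_X(\mathcal{B})\cdot d$, and $w_X(\mathcal{B})\cdot nw$ between $w_X(\mathcal{B})\cdot ib(C(X),\tau_{\mathcal{B}}^{w})$ and $w_X(\mathcal{B})\cdot w(C(X),\tau_{\mathcal{B}}^{w})$; the left end equals $w(C(X),\tau_{\mathcal{B}}^{w})$ by the identity just proved, and the right end equals $w(C(X),\tau_{\mathcal{B}}^{w})$ because $w_X(\mathcal{B}) \leq w(C(X),\tau_{\mathcal{B}}^{w})$. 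I do not anticipate a genuine obstacle here: all the real content lives in Theorem \ref{character of topology of wc} and Proposition \ref{ib of SWT}, and the only point requiring any care is the infinite-cardinal absorption $dn_{\mathcal{B}}(X)\cdot ib(C(X),\tau_{\mathcal{B}}^{w})\cdot w_X(\mathcal{B}) = ib(C(X),\tau_{\mathcal{B}}^{w})\cdot w_X(\mathcal{B})$, which is precisely where the hypothesis that each invariant is at least $\aleph_0$ is used.
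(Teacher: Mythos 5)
Your proposal is correct and follows exactly the route the paper intends: its proof of this theorem is a one-line citation of Remark \ref{cardinals inequalities}, Theorem \ref{character of topology of wc} and Proposition \ref{ib of SWT}$(ii)$, and you have simply filled in the straightforward cardinal-arithmetic details of that same argument.
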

\begin{proof} It follows from Remark \ref{cardinals inequalities}, Theorem \ref{character of topology of wc} and Proposition \ref{ib of SWT}$(ii)$.
\end{proof}

\begin{corollary}\label{DHHM result3} \textnormal{(\cite{DHHM}, Theorem 2.9)} For a metric space $(X,d)$, 
	\begin{align*}
	w(C(X),\tau^{w})&=ib(C(X),\tau^{w}) = c(C(X),\tau^{w})=L(C(X),\tau^{w})\\&=d(C(X),\tau^{w})=nw(C(X),\tau^{w}).
	\end{align*}
\end{corollary}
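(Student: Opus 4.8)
The plan is to derive Corollary \ref{DHHM result3} as the special case $\mathcal{B} = \mathcal{P}_0(X)$ of Theorem \ref{weight of tau_B^w}, exploiting two facts from the preliminaries: first, the Whitney topology $\tau^w$ is exactly $\tau_{\mathcal{B}}^w$ when $\mathcal{B} = \mathcal{P}_0(X)$ (stated in Section~2 right after the definition of $[B,\epsilon]^w$); and second, $w_X(\mathcal{P}_0(X)) = \aleph_0$ (recorded in the definition of $w_X(\mathcal{B})$). So first I would set $\mathcal{B} = \mathcal{P}_0(X)$ and check that the hypotheses of Theorem \ref{weight of tau_B^w} are met: $\mathcal{P}_0(X)$ is a bornology on the metric space $(X,d)$, and it has a closed base since $\overline{A} \in \mathcal{P}_0(X)$ for every $A \in \mathcal{P}_0(X)$, giving a base of closed sets. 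With the hypotheses verified, Theorem \ref{weight of tau_B^w} applies verbatim.

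The core step is then a substitution. Plugging $w_X(\mathcal{P}_0(X)) = \aleph_0$ into the chain of equalities from Theorem \ref{weight of tau_B^w}, each factor $w_X(\mathcal{B}) \cdot \kappa$ collapses to $\aleph_0 \cdot \kappa = \kappa$, because every cardinal invariant appearing here ($w$, $ib$, $c$, $L$, $d$, $nw$) is by its own definition of the form $\aleph_0 + (\cdots)$ and hence is at least $\aleph_0$; thus $\aleph_0 \cdot \kappa = \max\{\aleph_0,\kappa\} = \kappa$ for each such $\kappa$. Writing $\tau^w$ in place of $\tau_{\mathcal{P}_0(X)}^w$ on each space then yields precisely the six-fold equality asserted in Corollary \ref{DHHM result3}.

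The main obstacle is essentially bookkeeping rather than mathematical difficulty: I must be sure that absorbing the $\aleph_0$ factor is legitimate, which rests on the observation that every cardinal function in the statement is defined with an additive $\aleph_0$ floor, so none of them can drop below $\aleph_0$ and the factor $w_X(\mathcal{P}_0(X)) = \aleph_0$ is genuinely redundant. A secondary point worth a sentence is confirming that $\tau_{\mathcal{P}_0(X)}^w$ really is the Whitney topology $\tau^w$, which is exactly the remark following the definition of $[B,\epsilon]^w$ in Section~2; once that identification is made, the corollary is immediate. Since all the heavy lifting was done in Theorem \ref{weight of tau_B^w}, the proof should be a single short paragraph invoking that theorem with $\mathcal{B} = \mathcal{P}_0(X)$.
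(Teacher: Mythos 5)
Your proposal is correct and matches the paper's (implicit) proof: the corollary is stated as an immediate specialization of Theorem \ref{weight of tau_B^w} to $\mathcal{B}=\mathcal{P}_0(X)$, using $w_X(\mathcal{P}_0(X))=\aleph_0$ and the fact that each cardinal invariant is at least $\aleph_0$ to absorb that factor. Your bookkeeping about the closed base and the identification $\tau_{\mathcal{P}_0(X)}^w=\tau^w$ is exactly the right check.
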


\begin{remark}\label{General remark}\normalfont{At this stage, we would like to mention that all results proved in this paper related to the space $(C(X),\tau_{\mathcal{B}}^{w})$ are true  for a Tychonoff space $X$ (instead of metric space $(X,d)$) with essentially the same proofs. In particular, Corollary \ref{DHHM result1}, Corollary \ref{DHHM result2}, and Corollary \ref{DHHM result3} are true for a Tychonoff space $X$. Therefore, one can observe that Corollaries \ref{DHHM result2} and \ref{DHHM result3} provide simpler proofs of the results originally proved in \cite{DHHM}.}\end{remark}
%

\section{Comparison with $(C(X),\tau_{\mathcal{B}}^s)$}

In the final section of the paper, we examine the relationships of the cardinal invariants of $(C(X),\tau_{\mathcal{B}}^{sw})$ with the corresponding cardinal invariants of $(C(X),\tau_{\mathcal{B}}^{s})$.  In particular, we show that the character (tightness) of $(C(X),\tau_{\mathcal{B}}^{sw})$ may be strictly greater than the character (tightness) of $(C(X),\tau_{\mathcal{B}}^{s})$ (Example \ref{example of character and tightness}). However, the global cardinal invariants such as density, weight, network weight, and index of narrowness of $(C(X),\tau_{\mathcal{B}}^{sw})$ are equal to the corresponding cardinal invariant of $(C(X),\tau_{\mathcal{B}}^{s})$.

\begin{theorem}\label{character and tightness comparison}Let $(X,d)$ be a metric space and let $\mathcal{B}$ be a bornology on $X$ with a closed base. Then the following statements hold:
	\begin{enumerate}[(i)]
		\item $\chi(C(X),\tau_{\mathcal{B}}^{sw}) = dn_{\mathcal{B}}^s(X)\cdot\chi(C(X),\tau_{\mathcal{B}}^{s})$; and \\ $\chi(C(X),\tau_{\mathcal{B}}^{w}) = dn_{\mathcal{B}}(X)\cdot\chi(C(X),\tau_{\mathcal{B}})$.
		\vspace*{.2 cm}
		\item $t(C(X),\tau_{\mathcal{B}}^{sw}) = dn_{\mathcal{B}}^s(X)\cdot t(C(X),\tau_{\mathcal{B}}^{s})$; and \\ $t(C(X),\tau_{\mathcal{B}}^{w}) = dn_{\mathcal{B}}(X)\cdot t(C(X),\tau_{\mathcal{B}})$.
	\end{enumerate} 	   
\end{theorem}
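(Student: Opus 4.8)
The plan is to obtain all four equalities by substituting the formulas of Theorems \ref{character of topology of SWC}, \ref{character of topology of wc} and \ref{tightness of topology of SWC} into the corresponding characterizations of the character and tightness of the weaker topologies $\tau_{\mathcal{B}}^{s}$ and $\tau_{\mathcal{B}}$. Thus the real content is the set of four auxiliary identities
\begin{align*}
\chi(C(X),\tau_{\mathcal{B}}^{s}) &= \chi(C(X),\tau_{\mathcal{B}}) = w_X(\mathcal{B}), \\
t(C(X),\tau_{\mathcal{B}}^{s}) &= L^s(X,\mathcal{B}), \qquad t(C(X),\tau_{\mathcal{B}}) = L(X,\mathcal{B}),
\end{align*}
which are the degenerate ($\epsilon$ constant) analogues of the results proved above and are available in \cite{Cfbafs}; I would either cite them or re-derive them as follows.

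For the character identities the upper bound is easy: if $\mathcal{B}_0$ is a base for $\mathcal{B}$ with $|\mathcal{B}_0| = w_X(\mathcal{B})$, then $\{[B,1/n]^{s}(f_0) : B \in \mathcal{B}_0,\ n \in \mathbb{N}\}$ is a local base at $f_0$ in $(C(X),\tau_{\mathcal{B}}^{s})$, so $\chi(C(X),\tau_{\mathcal{B}}^{s}) \leq w_X(\mathcal{B}) \cdot \aleph_0 = w_X(\mathcal{B})$, and the same works for $\tau_{\mathcal{B}}$. For the lower bound $w_X(\mathcal{B}) \leq \chi(C(X),\tau_{\mathcal{B}}^{s})$ I would reuse the Urysohn-function argument from the second half of the proof of Theorem \ref{character of topology of SWC}: from a minimal local base at $f_0$ whose members are $[B_i,1/n_i]^{s}(f_0)$ with each $B_i$ closed, one extracts a base $\{B_i\}$ for $\mathcal{B}$. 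The decisive point is that the scalar $1/n_i$ carries no dominating content, so no $dn$-factor appears and the character collapses to $w_X(\mathcal{B})$ rather than to $dn_{\mathcal{B}}^s(X)\cdot w_X(\mathcal{B})$. The tightness identities are handled the same way: the construction in Proposition \ref{dn_B and L_B is less than tightness}(ii) uses only $\{0,1\}$-valued functions and so gives $L^s(X,\mathcal{B}) \leq t(C(X),\tau_{\mathcal{B}}^{s})$ verbatim, while in the upper-bound argument of Proposition \ref{tightness is less than dn_B L_B}(i) the strongly dominating family is needed only to absorb a variable $\epsilon \in C^+(X)$; for a constant $\epsilon = 1/n$ its role is played by the countably many scalars, yielding $t(C(X),\tau_{\mathcal{B}}^{s}) \leq \aleph_0 \cdot L^s(X,\mathcal{B}) = L^s(X,\mathcal{B})$.

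With these identities in hand the theorem is immediate cardinal arithmetic: for (i), Theorem \ref{character of topology of SWC} gives $\chi(C(X),\tau_{\mathcal{B}}^{sw}) = dn_{\mathcal{B}}^s(X)\cdot w_X(\mathcal{B}) = dn_{\mathcal{B}}^s(X)\cdot \chi(C(X),\tau_{\mathcal{B}}^{s})$, and Theorem \ref{character of topology of wc} gives the analogous statement for $\tau_{\mathcal{B}}^{w}$ and $\tau_{\mathcal{B}}$; for (ii), Theorem \ref{tightness of topology of SWC} gives $t(C(X),\tau_{\mathcal{B}}^{sw}) = dn_{\mathcal{B}}^s(X)\cdot L^s(X,\mathcal{B}) = dn_{\mathcal{B}}^s(X)\cdot t(C(X),\tau_{\mathcal{B}}^{s})$, and similarly for $\tau_{\mathcal{B}}^{w}$ against $\tau_{\mathcal{B}}$. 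I expect the only genuine obstacle to be the two lower bounds $w_X(\mathcal{B}) \leq \chi(C(X),\tau_{\mathcal{B}}^{s})$ and $L^s(X,\mathcal{B}) \leq t(C(X),\tau_{\mathcal{B}}^{s})$; once these are secured (or quoted from \cite{Cfbafs}), the substitution closes the argument with no further work.
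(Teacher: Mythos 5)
Your proposal is correct and follows essentially the same route as the paper: the authors likewise quote $\chi(C(X),\tau_{\mathcal{B}}^{s})=\chi(C(X),\tau_{\mathcal{B}})=w_X(\mathcal{B})$ and $t(C(X),\tau_{\mathcal{B}}^{s})=L^s(X,\mathcal{B})$, $t(C(X),\tau_{\mathcal{B}})=L(X,\mathcal{B})$ from the earlier literature and then substitute into Theorems \ref{character of topology of SWC}, \ref{character of topology of wc} and \ref{tightness of topology of SWC}. Your additional sketch of how to re-derive those quoted identities is sound but not needed for the argument.
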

\begin{proof} $(i)$. By Theorem 3.2 of \cite{Cfbafs}, $\chi(C(X),\tau_{\mathcal{B}}^{s})= \chi(C(X),\tau_{\mathcal{B}}^{s})=w_X(\mathcal{B})$. So the statement follows from Theorems \ref{character of topology of SWC} and \ref{character of topology of wc}.
	
$(ii)$. By Theorem 3.5 of \cite{Cfbafs}, $t(C(X),\tau_{\mathcal{B}}^{s})=L^s(X, \mathcal{B})$ and $t(C(X),\tau_{\mathcal{B}})=L(X, \mathcal{B})$. Hence the statement follows from Theorem \ref{tightness of topology of SWC}.	
\end{proof}




\begin{example}\label{example of character and tightness} Let $X=\mathbb{R}$ with the usual metric. Consider the bornology $\mathcal{B}$ on $\mathbb{R}$ with base $\mathcal{B}_0=\{[a,\infty): a\in \mathbb{R}\}$. Clearly, $w_{\mathbb{R}}(\mathcal{B})=\aleph_0$. Consequently, $L^s(\mathbb{R},\mathcal{B})=L(\mathbb{R},\mathcal{B})=\aleph_0$. By Theorems 3.2 and 3.4 of \cite{Cfbafs}, we have $\chi(C(\mathbb{R}),\tau_{\mathcal{B}}^{s})=\chi(C(\mathbb{R}),\tau_{\mathcal{B}}^{})=t(C(\mathbb{R}),\tau_{\mathcal{B}}^{s})=t(C(\mathbb{R}),\tau_{\mathcal{B}}^{})=\aleph_0$. 
	
It is easy to check that $\mathcal{B}$ is shielded from closed sets and has no compact base. By Theorems \ref{bornology has compact base iff strong domination number is aleph} and \ref{B is shielded from closed sets, then dn_B^s=dn_B}, we have $dn_{\mathcal{B}}^s(X) = dn_{\mathcal{B}}(X) > \aleph_0$, and by Proposition \ref{B is shielded from closed sets, then L^s(X,B)=L(X,B)}, we have $L^s(X,\mathcal{B}) = L(X,\mathcal{B})$. Therefore $\chi(C(\mathbb{R}),\tau_{\mathcal{B}}^{sw})=\chi(C(\mathbb{R}),\tau_{\mathcal{B}}^{w})>\aleph_0$ and $t(C(\mathbb{R}),\tau_{\mathcal{B}}^{sw})=t(C(\mathbb{R}),\tau_{\mathcal{B}}^{w})>\aleph_0$.\qed  \end{example}

\begin{proposition}\label{dn_B^s is less than density of SUT}Let $\mathcal{B}$ be a bornology on a metric space $(X,d)$ with a closed base. Then the following inequalities hold:
	\begin{enumerate}[(i)]
		\item $dn_{\mathcal{B}}^s(X) \leq d(C(X),\tau_{\mathcal{B}}^s) \leq d(C(X),\tau_{\mathcal{B}}^{sw})$.
		\item $dn_{\mathcal{B}}(X) \leq d(C(X),\tau_{\mathcal{B}}) \leq d(C(X),\tau_{\mathcal{B}}^w)$.
	\end{enumerate} 	  
\end{proposition}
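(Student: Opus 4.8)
The two parts are proved by the same argument, so I would describe the plan for part (i); part (ii) is obtained verbatim after deleting every reference to enlargements and to the parameter $\delta$. Each part is a chain of two inequalities. The right-hand inequality $d(C(X),\tau_{\mathcal{B}}^s) \leq d(C(X),\tau_{\mathcal{B}}^{sw})$ is purely topological: since $\tau_{\mathcal{B}}^s \subseteq \tau_{\mathcal{B}}^{sw}$, any $\tau_{\mathcal{B}}^{sw}$-dense subset of $C(X)$ is automatically $\tau_{\mathcal{B}}^s$-dense (a set dense in a finer topology remains dense in any coarser one), so density cannot increase when the topology is coarsened. The substantive content is therefore the left-hand inequality $dn_{\mathcal{B}}^s(X) \leq d(C(X),\tau_{\mathcal{B}}^s)$.

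For that inequality I would start from a $\tau_{\mathcal{B}}^s$-dense subset $D$ of $C(X)$ with $|D| = d(C(X),\tau_{\mathcal{B}}^s)$ and manufacture a strongly dominating family out of it, exactly as was done from a network in Proposition \ref{dn_B and nw_B is less than nw of SWC}$(ii)$. Concretely, set $E = \{1+|h| : h \in D\} \subseteq C^+(X)$, so that $|E| \leq |D|$. To check that $E$ strongly dominates $C(X)$ with respect to $\mathcal{B}$, I fix $g \in C(X)$ and $B \in \mathcal{B}$ and look at the basic $\tau_{\mathcal{B}}^s$-neighborhood $[B,1]^s(g)$. Density of $D$ supplies an $h \in D \cap [B,1]^s(g)$, which by the definition of $[B,1]^s$ means there is a $\delta > 0$ with $|g(x)-h(x)| < 1$ for all $x \in B^\delta$; hence $g(x) < |h(x)| + 1$ for all such $x$, and $1+|h| \in E$ is the required dominating function on $B^\delta$. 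This gives $dn_{\mathcal{B}}^s(X) \leq |E| \leq d(C(X),\tau_{\mathcal{B}}^s)$.

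Part (ii) runs identically with $\tau_{\mathcal{B}}$ in place of $\tau_{\mathcal{B}}^s$ and the plain basic neighborhood $[B,1](g)$ in place of $[B,1]^s(g)$: the membership $h \in D \cap [B,1](g)$ now gives $|g(x)-h(x)| < 1$ for all $x \in B$ (no enlargement, no $\delta$), so $E = \{1+|h| : h \in D\}$ is an ordinary dominating family and $dn_{\mathcal{B}}(X) \leq d(C(X),\tau_{\mathcal{B}})$; the inclusion $\tau_{\mathcal{B}} \subseteq \tau_{\mathcal{B}}^w$ then yields $d(C(X),\tau_{\mathcal{B}}) \leq d(C(X),\tau_{\mathcal{B}}^w)$.

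The argument is elementary, and I do not anticipate a genuine obstacle. The only point needing care is the choice of test function: replacing $h$ by $1+|h|$ is what converts the two-sided approximation bound $|g-h|<1$ into the one-sided domination $g \leq 1+|h|$, with the $+1$ absorbing the approximation error. Notably, neither the shield machinery nor any compactness argument enters here—the density bound flows directly from a single basic neighborhood.
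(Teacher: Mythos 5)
Your proposal is correct and follows essentially the same route as the paper: both take a $\tau_{\mathcal{B}}^s$-dense set $D$, pass to $\{1+|h| : h\in D\}$, and use a function from $D\cap[B,1]^s(g)$ to obtain strong domination on an enlargement $B^\delta$, with the second inequality coming from the inclusion $\tau_{\mathcal{B}}^s\subseteq\tau_{\mathcal{B}}^{sw}$. No substantive differences to report.
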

\begin{proof} $(i)$. Let $D\subseteq C(X)$ be dense in $(C(X),\tau_{\mathcal{B}}^{s})$ with $|D|=d(C(X),\tau_{\mathcal{B}}^{s})$. Let $F=\{1+|h|\in C^+(X): h\in D\}$. We show that $F$ is a strongly dominating subset of $C(X)$ with respect to $\mathcal{B}$. Let $f\in C(X)$ and $B\in \mathcal{B}$. So there is a function $h'\in D \cap [B,1]^s(f)$. Thus $f(x)< 1+|h(x)|$ for all $x\in B^\delta$ for some $\delta>0$. Therefore $dn_{\mathcal{B}}^s(X) \leq d(C(X),\tau_{\mathcal{B}}^s)$. Second inequality follows as  $\tau_{\mathcal{B}}^{sw}$ is finer than $\tau_{\mathcal{B}}^s$. 
	
$(ii)$. It can be proved in a manner similar to part $(i)$. 
\end{proof}	

\begin{proposition}\label{density of SWT is less than dn_B^s into density of SUT} Let $\mathcal{B}$ be a bornology on a metric space $(X,d)$ with a closed base. Then the following inequalities hold:
	\begin{enumerate}[(i)]
		\item $d(C(X),\tau_{\mathcal{B}}^{sw}) \leq dn_{\mathcal{B}}^s(X)\cdot d(C(X),\tau_{\mathcal{B}}^{s})$.
		\item $d(C(X),\tau_{\mathcal{B}}^{w}) \leq dn_{\mathcal{B}}(X)\cdot d(C(X),\tau_{\mathcal{B}})$.
	\end{enumerate} 	  
\end{proposition}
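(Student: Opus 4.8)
The plan is to mirror the structure of the previous propositions in this subsection: exhibit an explicit dense subset of $(C(X),\tau_{\mathcal{B}}^{sw})$ whose cardinality is bounded by the product $dn_{\mathcal{B}}^s(X)\cdot d(C(X),\tau_{\mathcal{B}}^{s})$. Since part $(ii)$ follows by the same argument with $\tau_{\mathcal{B}}^s$ replaced by $\tau_{\mathcal{B}}$ and strong domination replaced by ordinary domination, I would only write out part $(i)$ in detail. First I would fix a strongly dominating subset $F_{\mathcal{B}}^s\subseteq C^+(X)$ with respect to $\mathcal{B}$ with $|F_{\mathcal{B}}^s|=dn_{\mathcal{B}}^s(X)$, and a dense subset $D$ of $(C(X),\tau_{\mathcal{B}}^s)$ with $|D|=d(C(X),\tau_{\mathcal{B}}^s)$.

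The key idea is that a basic $\tau_{\mathcal{B}}^{sw}$-neighborhood $[B,\epsilon]^{sw}(f)$ has a variable error function $\epsilon\in C^+(X)$, whereas $\tau_{\mathcal{B}}^s$ only allows constant errors. The strongly dominating family lets me trade the variable tolerance for a constant one: given $B\in\mathcal{B}$ and $\epsilon\in C^+(X)$, I can find $\phi\in F_{\mathcal{B}}^s$ and $\delta>0$ with $\tfrac{1}{\phi}\leq\epsilon$ on $B^\delta$ (this is exactly how domination enters the character computation in Theorem \ref{character of topology of SWC}). So I would define the candidate dense set as
\begin{align*}
\widetilde{D}=\{\,h/\phi : h\in D,\ \phi\in F_{\mathcal{B}}^s\,\},
\end{align*}
or a suitable rescaling such as $\{h\cdot(1/\phi)\}$, and note $|\widetilde{D}|\leq dn_{\mathcal{B}}^s(X)\cdot d(C(X),\tau_{\mathcal{B}}^s)$. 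The plan is then: given $f\in C(X)$, $B\in\mathcal{B}$ and $\epsilon\in C^+(X)$, choose $\phi\in F_{\mathcal{B}}^s$ with $1/\phi\leq\epsilon$ on some enlargement $B^\delta$; then apply density of $D$ in $\tau_{\mathcal{B}}^s$ to the function $\phi f$ (or to $f$ with the constant tolerance $1$ on $B$) to produce $h\in D$ approximating $f$ within the constant-error ball; and finally verify that the resulting element of $\widetilde{D}$ lies in $[B,\epsilon]^{sw}(f)$.

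The main obstacle I expect is bookkeeping the multiplication by $\phi$ so that a \emph{constant} $\tau_{\mathcal{B}}^s$-approximation of $\phi f$ converts into a \emph{variable} $\tau_{\mathcal{B}}^{sw}$-approximation of $f$ on a genuine enlargement $B^\delta$. Concretely, if $|h(x)-(\phi f)(x)|<1$ for all $x\in B^{\delta'}$, I want $|h(x)/\phi(x)-f(x)|=|h(x)-\phi(x)f(x)|/\phi(x)<1/\phi(x)\leq\epsilon(x)$ on a common enlargement, which requires intersecting the two enlargements $B^\delta$ and $B^{\delta'}$ and using positivity of $\phi$; the continuity of $\phi$ and $1/\phi$ on the closed set $\overline{B}$ makes the inequality $1/\phi\leq\epsilon$ persist on a small enlargement exactly as in the cited theorems. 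Once the element $h/\phi\in\widetilde{D}$ is shown to satisfy $h/\phi\in[B,\epsilon]^{sw}(f)$, density of $\widetilde{D}$ in $(C(X),\tau_{\mathcal{B}}^{sw})$ follows, giving $d(C(X),\tau_{\mathcal{B}}^{sw})\leq|\widetilde{D}|\leq dn_{\mathcal{B}}^s(X)\cdot d(C(X),\tau_{\mathcal{B}}^s)$, and part $(ii)$ is identical with constants in place of the $\phi$'s.
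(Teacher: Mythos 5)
Your proposal is correct and follows essentially the same route as the paper: the paper also forms the set $\mathcal{S}=\{g/\phi : g\in D,\ \phi\in F_{\mathcal{B}}^s\}$, picks $\phi\in F_{\mathcal{B}}^s$ with $1/\phi\leq\epsilon$ on some $B^{\delta_1}$, approximates $\phi f$ by some $g\in D\cap[B,1]^s(\phi f)$, and divides by $\phi$ on the intersection of the two enlargements. The only cosmetic difference is that the persistence of $1/\phi\leq\epsilon$ on an enlargement comes directly from the definition of a strongly dominating set applied to $1/\epsilon$, not from a separate continuity argument.
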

\begin{proof} $(i)$. Let $F_{\mathcal{B}}^s\subseteq C^+(X)$ be a strongly dominating subset of $C(X)$ with respect to $\mathcal{B}$ such that $|F_{\mathcal{B}}^s| = dn_{\mathcal{B}}^s(X)$, and let $D\subseteq C(X)$ be dense in $(C(X),\tau_{\mathcal{B}}^{s})$ with $|D|=d(C(X),\tau_{\mathcal{B}}^{s})$. Consider the set $\mathcal{S}=\left\lbrace \dfrac{g}{\phi}:g\in D \text{ and } \phi\in F_{\mathcal{B}}^s\right\rbrace \subseteq C(X)$. Clearly, $|\mathcal{S}|= dn_{\mathcal{B}}^s(X)\cdot d(C(X),\tau_{\mathcal{B}}^{s})$. It suffices to show that $\mathcal{S}$ is dense in $(C(X),\tau_{\mathcal{B}}^{sw})$. Let $[B,\epsilon]^{sw}(f)$ be a basic open set in $(C(X),\tau_{\mathcal{B}}^{sw})$ for some $f\in C(X)$. There exists $\phi\in F_{\mathcal{B}}^s$ such that $\frac{1}{\phi(x)}\leq \epsilon(x)$ for all $x\in B^{\delta_1}$ for some $\delta_1 >0$. Since $D$ is dense in $(C(X),\tau_{\mathcal{B}}^{s})$, there exists $g\in D\cap [B,1]^s(\phi f)$. This implies that $|g(x)-(\phi f)(x)|<1$ for all $x\in B^{\delta_2}$ for some $\delta_2 >0$. Consequently, $\left| \dfrac{g}{\phi(x)}-f(x)\right| <\dfrac{1}{\phi(x)}\leq \epsilon(x)$ for all $x\in B^{\delta}$ for $\delta=\min\{\delta_1,\delta_2\}$. Therefore $\dfrac{g}{\phi}\in \mathcal{S}\cap [B,\epsilon]^{sw}(f)$.   
	
	$(ii)$. It can be proved in a manner similar to part $(i)$. 
\end{proof}

The following theorem shows that the density of the space $(C(X),\tau_{\mathcal{B}}^{sw})$ is equal to the density of $(C(X),\tau_{\mathcal{B}}^{s})$.  

\begin{theorem}\label{density of SWT and WT}Let $\mathcal{B}$ be a bornology on a metric space $(X,d)$ with a closed base. Then $d(C(X),\tau_{\mathcal{B}}^s) = d(C(X),\tau_{\mathcal{B}}^{sw})$; and $d(C(X),\tau_{\mathcal{B}}) = d(C(X),\tau_{\mathcal{B}}^{w})$.
\end{theorem}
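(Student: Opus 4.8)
The plan is to prove the two equalities by combining the two preceding propositions, which already sandwich each density from both sides. Concretely, Proposition \ref{dn_B^s is less than density of SUT}$(i)$ gives $dn_{\mathcal{B}}^s(X) \leq d(C(X),\tau_{\mathcal{B}}^s) \leq d(C(X),\tau_{\mathcal{B}}^{sw})$, and Proposition \ref{density of SWT is less than dn_B^s into density of SUT}$(i)$ gives $d(C(X),\tau_{\mathcal{B}}^{sw}) \leq dn_{\mathcal{B}}^s(X)\cdot d(C(X),\tau_{\mathcal{B}}^{s})$. The strategy is to feed the first chain of inequalities into the product on the right-hand side of the second and collapse it. So first I would observe that since $dn_{\mathcal{B}}^s(X) \leq d(C(X),\tau_{\mathcal{B}}^s)$, the product $dn_{\mathcal{B}}^s(X)\cdot d(C(X),\tau_{\mathcal{B}}^{s})$ is just $d(C(X),\tau_{\mathcal{B}}^{s})$ (the maximum of the two cardinals absorbs the smaller one, using that cardinal multiplication of infinite cardinals returns the larger factor).

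Carrying this out, the upper bound from Proposition \ref{density of SWT is less than dn_B^s into density of SUT}$(i)$ becomes $d(C(X),\tau_{\mathcal{B}}^{sw}) \leq dn_{\mathcal{B}}^s(X)\cdot d(C(X),\tau_{\mathcal{B}}^{s}) = d(C(X),\tau_{\mathcal{B}}^{s})$. Together with the inequality $d(C(X),\tau_{\mathcal{B}}^s) \leq d(C(X),\tau_{\mathcal{B}}^{sw})$ from Proposition \ref{dn_B^s is less than density of SUT}$(i)$ (which is also immediate since $\tau_{\mathcal{B}}^{sw}$ is finer than $\tau_{\mathcal{B}}^s$, so every dense set in the finer topology is dense in the coarser one), this yields the desired equality $d(C(X),\tau_{\mathcal{B}}^s) = d(C(X),\tau_{\mathcal{B}}^{sw})$. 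The second equality for the Whitney topologies $\tau_{\mathcal{B}}$ and $\tau_{\mathcal{B}}^{w}$ follows verbatim by instead invoking part $(ii)$ of both propositions, replacing $dn_{\mathcal{B}}^s(X)$ with $dn_{\mathcal{B}}(X)$ and the strong topologies with their Whitney counterparts.

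There is genuinely no hard obstacle here; the entire content of the theorem has been pushed into the two preceding propositions, and the only subtlety is the cardinal-arithmetic absorption step. The one point worth stating carefully is why $dn_{\mathcal{B}}^s(X)\cdot d(C(X),\tau_{\mathcal{B}}^{s}) = d(C(X),\tau_{\mathcal{B}}^{s})$: both factors are infinite cardinals (each contains the summand $\aleph_0$ by definition), and for infinite cardinals $\kappa \leq \lambda$ one has $\kappa\cdot\lambda = \lambda$. Since Proposition \ref{dn_B^s is less than density of SUT}$(i)$ guarantees $dn_{\mathcal{B}}^s(X) \leq d(C(X),\tau_{\mathcal{B}}^{s})$, the product indeed collapses to $d(C(X),\tau_{\mathcal{B}}^{s})$, and the proof is complete in a few lines.
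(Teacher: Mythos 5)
Your proof is correct and is exactly the argument the paper intends: the theorem is deduced from Propositions \ref{dn_B^s is less than density of SUT} and \ref{density of SWT is less than dn_B^s into density of SUT} by absorbing $dn_{\mathcal{B}}^s(X)$ (respectively $dn_{\mathcal{B}}(X)$) into the density via cardinal arithmetic. You simply spell out the absorption step that the paper leaves implicit.
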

\begin{proof} It follows from Propositions \ref{dn_B^s is less than density of SUT} and \ref{density of SWT is less than dn_B^s into density of SUT}.
\end{proof} 

\begin{theorem}\label{w, nw, ib comparison}Let $(X,d)$ be a metric space and let $\mathcal{B}$ be a bornology on $X$ with a closed base. Then the following statements hold:
	\begin{enumerate}[(i)]
		\item $w(C(X),\tau_{\mathcal{B}}^{sw}) = w(C(X),\tau_{\mathcal{B}}^s)$; and $w(C(X),\tau_{\mathcal{B}}^{w}) = w(C(X),\tau_{\mathcal{B}})$.
		\item $nw(C(X),\tau_{\mathcal{B}}^{sw}) = nw(C(X),\tau_{\mathcal{B}}^s)$; and $nw(C(X),\tau_{\mathcal{B}}^{w}) = nw(C(X),\tau_{\mathcal{B}})$.
		\item $ib(C(X),\tau_{\mathcal{B}}^{sw}) = ib(C(X),\tau_{\mathcal{B}}^s)$; and $ib(C(X),\tau_{\mathcal{B}}^{w}) = ib(C(X),\tau_{\mathcal{B}})$.
	\end{enumerate} 	   
\end{theorem}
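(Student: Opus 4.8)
The plan is to derive all three equalities in Theorem~\ref{w, nw, ib comparison} from the already-established weight formulas (Theorems~\ref{weight of tau_B^sw} and~\ref{weight of tau_B^w}), the corresponding known formulas for the strong uniform and uniform topologies from \cite{Cfbafs}, and the density equalities of Theorem~\ref{density of SWT and WT}. Since the arguments for the strong-Whitney/strong-uniform pair and the Whitney/uniform pair are identical, I would prove only the former in each part and remark that the latter follows mutatis mutandis.

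For part~(iii), the index of narrowness, I would proceed as follows. Recall from Theorem~\ref{weight of tau_B^sw} that $w(C(X),\tau_{\mathcal{B}}^{sw}) = w_X(\mathcal{B})\cdot ib(C(X),\tau_{\mathcal{B}}^{sw})$, and from Remark~\ref{cardinals inequalities}(ii) applied to the group $(C(X),\tau_{\mathcal{B}}^s)$ together with Theorem~3.2 of \cite{Cfbafs} (which gives $\chi(C(X),\tau_{\mathcal{B}}^s)=w_X(\mathcal{B})$) that $w(C(X),\tau_{\mathcal{B}}^s) = w_X(\mathcal{B})\cdot ib(C(X),\tau_{\mathcal{B}}^s)$. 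The key structural fact I would exploit is that the narrowness of a topological group is a \emph{uniform} invariant determined by a neighborhood base of the identity: an explicit computation shows that $ib(C(X),\tau_{\mathcal{B}}^{s}) = w_X(\mathcal{B})$ and $ib(C(X),\tau_{\mathcal{B}}^{sw}) = w_X(\mathcal{B})$ as well, because covering $C(X)$ by translates of a basic entourage neighborhood $[B,\epsilon]^{sw}(f_0)$ reduces, via the strong domination argument already used in Proposition~\ref{ib of SWT}, to controlling the base $\mathcal{B}_0$. Concretely, I would show $ib(C(X),\tau_{\mathcal{B}}^{sw}) \leq w_X(\mathcal{B})$ by producing, for each basic neighborhood $[B,\epsilon]^{sw}(f_0)$, a translating set $S$ indexed by a base for $\mathcal{B}$ of size $w_X(\mathcal{B})$, and then combine with Proposition~\ref{ib of SWT}(i), which gives $dn_{\mathcal{B}}^s(X)\leq ib(C(X),\tau_{\mathcal{B}}^{sw})\cdot w_X(\mathcal{B})$, to pin down both indices of narrowness as equal to $w_X(\mathcal{B})$; hence they coincide.

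Given (iii), parts~(i) and~(ii) become almost immediate. For (i), substitute $ib(C(X),\tau_{\mathcal{B}}^{sw}) = ib(C(X),\tau_{\mathcal{B}}^s)$ into the two weight formulas:
\begin{align*}
w(C(X),\tau_{\mathcal{B}}^{sw}) = w_X(\mathcal{B})\cdot ib(C(X),\tau_{\mathcal{B}}^{sw}) = w_X(\mathcal{B})\cdot ib(C(X),\tau_{\mathcal{B}}^{s}) = w(C(X),\tau_{\mathcal{B}}^s),
\end{align*}
where the last equality is Theorem~\ref{weight of tau_B^sw} read for $\tau_{\mathcal{B}}^s$ (equivalently, Remark~\ref{cardinals inequalities}(ii) plus $\chi(C(X),\tau_{\mathcal{B}}^s)=w_X(\mathcal{B})$). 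For (ii), I would use that network weight sits between density and weight: by Remark~\ref{cardinals inequalities}(i), $d \leq nw \leq w$ for each space. Since Theorem~\ref{density of SWT and WT} already gives equal densities and part~(i) gives equal weights, and since $w = w_X(\mathcal{B})\cdot nw$ holds for each of the four topologies (Theorem~\ref{weight of tau_B^sw}/\ref{weight of tau_B^w} for the Whitney versions, and the analogue for $\tau_{\mathcal{B}}^s$, $\tau_{\mathcal{B}}$), the network weights are squeezed to be equal as well.

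The main obstacle I anticipate is the honest verification in part~(iii) that $ib(C(X),\tau_{\mathcal{B}}^{sw}) \leq w_X(\mathcal{B})$, i.e. producing for each entourage neighborhood a covering set of translates indexed by the bornology base. The subtlety is that the ``strong'' enlargement $B^\delta$ in the definition of $[B,\epsilon]^{sw}$ carries an existential quantifier on $\delta$, so one must ensure the chosen translates $S_B$ genuinely cover $C(X)$ in the strong-Whitney sense rather than merely the uniform sense; this is where the closed-base hypothesis and the reduction to a base $\mathcal{B}_0$ of minimal cardinality do the work. Once that upper bound is in hand, everything else is bookkeeping with the cardinal inequalities of Remark~\ref{cardinals inequalities} and the previously proved theorems.
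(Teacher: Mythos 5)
Your reduction has two genuine gaps, and the more serious one is the foundation of the whole plan. In part (iii) you propose to prove that $ib(C(X),\tau_{\mathcal{B}}^{s})$ and $ib(C(X),\tau_{\mathcal{B}}^{sw})$ are both \emph{equal} to $w_X(\mathcal{B})$, in particular that $ib(C(X),\tau_{\mathcal{B}}^{sw})\leq w_X(\mathcal{B})$. This is false. Take $X=\mathbb{R}$ and $\mathcal{B}=\mathcal{P}_0(\mathbb{R})$, so $w_X(\mathcal{B})=\aleph_0$; by Proposition \ref{ib of SWT}(i), $dn(\mathbb{R})=dn_{\mathcal{B}}^s(\mathbb{R})\leq ib(C(\mathbb{R}),\tau^{sw}_{\mathcal{B}})\cdot\aleph_0$, and $dn(\mathbb{R})>\aleph_0$ by Corollary \ref{DHHM theorem}, so $ib(C(\mathbb{R}),\tau^{w})>\aleph_0=w_X(\mathcal{B})$ (the same happens already for $\tau_{\mathcal{B}}^s$ with the bornology of Example \ref{example of character and tightness}). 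The reason your ``translating set indexed by a base $\mathcal{B}_0$'' cannot exist is that for a \emph{single} $B$ the covering $C(X)=[B,\epsilon]^{sw}(f_0)+S$ is a statement about approximating arbitrary continuous functions within $\epsilon$ on an enlargement of $B$; this is a domination-type problem on $B$, not a cofinality problem in $\mathcal{B}$, and it cannot be solved with $w_X(\mathcal{B})$ many translates when $\overline{B}$ is noncompact. The correct route, and the one the paper takes, avoids computing either index: it shows directly that $ib(C(X),\tau_{\mathcal{B}}^{sw})\leq ib(C(X),\tau_{\mathcal{B}}^{s})$ by rescaling --- given $[B,\epsilon]^{sw}(f_0)$, take $S$ with $C(X)=[B,1]^{s}(f_0)+S$ and $|S|\leq ib(C(X),\tau_{\mathcal{B}}^{s})$, and check that $S'=\{\epsilon f: f\in S\}$ satisfies $C(X)=[B,\epsilon]^{sw}(f_0)+S'$, since $g/\epsilon=h+f$ gives $g=\epsilon h+\epsilon f$ with $\epsilon h\in[B,\epsilon]^{sw}(f_0)$. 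Part (i) as you state it would follow from a corrected (iii), but it is cleaner (and independent of (iii)) to use $w=w_X(\mathcal{B})\cdot d$ for all four topologies together with Theorem \ref{density of SWT and WT}, which is what the paper does.

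Part (ii) also does not close: from $d(\tau_1)=d(\tau_2)$, $w(\tau_1)=w(\tau_2)$ and $d\leq nw\leq w$ one cannot ``squeeze'' $nw(\tau_1)=nw(\tau_2)$ unless density and weight coincide, and the identity $w=w_X(\mathcal{B})\cdot nw$ likewise fails to determine $nw$ when $nw<w_X(\mathcal{B})$ is possible. You need the actual network-weight formulas: $nw(C(X),\tau_{\mathcal{B}}^{sw})=dn_{\mathcal{B}}^s(X)\cdot d(C(X),\tau_{\mathcal{B}}^{sw})\cdot nw(X,\mathcal{B})$ from Theorem \ref{network weight of topology of SWC} and $nw(C(X),\tau_{\mathcal{B}}^{s})=d(C(X),\tau_{\mathcal{B}}^{s})\cdot nw(X,\mathcal{B})$ from \cite{Cfbafs}; then the extra factor $dn_{\mathcal{B}}^s(X)$ is absorbed because $dn_{\mathcal{B}}^s(X)\leq d(C(X),\tau_{\mathcal{B}}^{s})$ (Proposition \ref{dn_B^s is less than density of SUT}) and the densities agree, while the reverse inequality is just monotonicity of $nw$ under refinement.
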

\begin{proof} We only give proofs for the statements involving $\tau_{\mathcal{B}}^{sw}$.
	
$(i)$. By Theorem \ref{weight of tau_B^sw}, $w(C(X),\tau_{\mathcal{B}}^{sw}) = w_X(\mathcal{B})\cdot d(C(X),\tau_{\mathcal{B}}^{sw})$. Also by Theorem 3.2 of \cite{Cfbafs} and Remark \ref{cardinals inequalities}(ii), $w(C(X),\tau_{\mathcal{B}}^{s}) = w_X(\mathcal{B})\cdot d(C(X),\tau_{\mathcal{B}}^{s})$. Hence the result follows from Theorem \ref{density of SWT and WT}.
	
$(ii)$. By Theorem \ref{network weight of topology of SWC}, $nw(C(X),\tau_{\mathcal{B}}^{sw}) = dn_{\mathcal{B}}^s(X)\cdot  d(C(X),\tau_{\mathcal{B}}^{sw})\cdot nw(X,\mathcal{B})$, and by Theorem 3.8 of \cite{Cfbafs}, we have $nw(C(X),\tau_{\mathcal{B}}^{s}) = d(C(X),\tau_{\mathcal{B}}^{s})\cdot nw(X,\mathcal{B})$. Using Proposition \ref{dn_B^s is less than density of SUT} and Theorem \ref{density of SWT and WT}, we get $nw(C(X),\tau_{\mathcal{B}}^{sw})\leq nw(C(X),\tau_{\mathcal{B}}^{s})$. The reverse inequality $nw(C(X),\tau_{\mathcal{B}}^{sw})\geq nw(C(X),\tau_{\mathcal{B}}^{s})$ follows as $\tau_{\mathcal{B}}^{s} \subseteq \tau_{\mathcal{B}}^{sw}$. 
	
$(iii)$. Since $\tau_{\mathcal{B}}^{sw}$ is finer than $\tau_{\mathcal{B}}^{s}$, $ib(C(X),\tau_{\mathcal{B}}^{s}) \leq ib(C(X),\tau_{\mathcal{B}}^{sw})$. For reverse inequality, let $[B,\epsilon]^{sw}(f_0)$ be a basic neighborhood of the identity $f_0$ in $(C(X),\tau_{\mathcal{B}}^{sw})$ for some $B\in\mathcal{B}$ and $\epsilon\in C^+(X)$. For the basic neighborhood $[B,1]^{s}(f_0)$ of the identity $f_0$ in $(C(X),\tau_{\mathcal{B}}^{s})$, there exists a subset $S$ of $C(X)$ such that $|S|\leq ib(C(X),\tau_{\mathcal{B}}^{s})$ and $C(X)=[B,1]^{s}(f_0)+S$. If $S'=\{\epsilon f : f\in S\}$, then $|S'| = |S| \leq  ib(C(X),\tau_{\mathcal{B}}^{s})$. We claim that $C(X)=[B,\epsilon]^{sw}(f_0)+S'$. Let $g\in C(X)$. So there exist $h\in [B,1]^{s}(f_0)$ and $f\in S$ such that $g/\epsilon=h+f$. Thus $g=\epsilon h+\epsilon f$, where $\epsilon h\in [B,\epsilon]^{sw}(f_0)$ and $\epsilon f \in S'$. Therefore $ib(C(X),\tau_{\mathcal{B}}^{sw}) \leq ib(C(X),\tau_{\mathcal{B}}^{s})$. 
\end{proof}

\bibliographystyle{plain}
\bibliography{reference_file_cardinal_functions}

\end{document}